\newcommand{\measrestrict}{%
  \,\raisebox{-.127ex}{\reflectbox{\rotatebox[origin=br]{-90}{$\lnot$}}}\,%
}
\newtheorem{thm}{Theorem}[section]
\newtheorem*{theorem*}{Theorem}
\newtheorem*{acknowledgement*}{Acknowledgement}
\newtheorem{cor}[thm]{Corollary}
\newtheorem{claim}[thm]{Claim}
\newtheorem{lem}[thm]{Lemma}
\newtheorem{prop}[thm]{Proposition}
\theoremstyle{definition}
\newtheorem{defn}[thm]{Definition}
\theoremstyle{remark}
\newtheorem{rem}[thm]{Remark}
\numberwithin{equation}{section}
\newcommand{\set}[1]{\left\{#1\right\}}
\newcommand{\Real}{\mathbb R}
\newcommand{\func}[1]{\ensuremath{\mathop{\mathrm{#1}}} }
\newcommand{\dist}[0]{\mathrm{dist}}
\newcommand{\spt}[0]{\func{spt}}
\begin{document}

\title[The level set flow of a low entropy hypersurface does not disconnect]{The level set flow of a hypersurface in $\Real^4$ of low entropy does not disconnect}
\date{}
\author{Jacob Bernstein}
\address{
Department of Mathematics\\
Johns Hopkins University\\
Baltimore, MD 21218, USA}
\email{bernstein@math.jhu.edu }

\author{Shengwen Wang}
\address{
Department of Mathematics\\
Binghamton University\\
Vestal, NY 13850, USA}
\email{swang@math.binghamton.edu }

\thanks{The first author was partially supported by the NSF Grant DMS-1609340.}

\subjclass[2010]{53C44}

\begin{abstract}
We show that if $\Sigma\subset \Real^4$ is a closed, connected hypersurface with entropy $\lambda(\Sigma)\leq \lambda(\mathbb{S}^2\times \Real)$, then the level set flow of $\Sigma$ never disconnects.  We also obtain a sharp version of the forward clearing out lemma for non-fattening flows in $\Real^4$ of low entropy.
\end{abstract}
\maketitle

\section{Introduction}
A family of hypersurfaces $\Sigma_t\subset\mathbb R^{n+1}$ evolves by mean curvature flow (MCF) if it satisfies
\begin{equation}\label{MCF}
\left(\frac{\partial}{\partial t}\mathbf{x}_{\Sigma_t}\right)^\perp=\mathbf{H}_{\Sigma_t}
\end{equation}
here a hypersurface is a smooth submanifold of codimension one and $\mathbf{x}_{\Sigma_t}$ is the position vector, $\mathbf{H}_{\Sigma_t}$ is the mean curvature vector and $\perp$ is the projection onto the normal of $\Sigma_t$.  A fundamental property of MCF is that the flow of a closed hypersurface must develop a singularity in finite time.  If one considers the level set flow (see Chen-Giga-Goto \cite{CGG} and Evans-Spruck \cite{ES1,ES2,ES3,ES4}), then one obtains a canonical set theoretic weak mean curvature flow that persists through singularities and, for closed initial data, vanishes in finite time. By definition, as long as the flow is smooth, then the topology of the hypersurfaces does not change, however this need not be the case for the level set flow after the first singularity.

When $n=1$, it follows from  Gage-Hamilton \cite{GH} and Grayson \cite{Grayson} that the flow disappears when it becomes singular.  In particular, the flow remains connected until it disappears. In contrast, when $n>1$, non-degenerate neck-pinch examples show that there are flows that become singular without disappearing.  In these examples,  the level set flow disconnects after the neck-pinch singularity. In \cite{BW3}, the first author and L. Wang showed that, when $n=2$ and the initial entropy is small (see \eqref{EntropyDef} below),  then the flow disappears at its first singularity.  This result makes use of a classification of singularity models in $\Real^3$ of low entropy from \cite{BW3} and whether such a classification exists in higher dimension is unknown.  In the present note we show that when $n=3$ and the initial hypersurface is closed, connected and of low entropy, then even if the flow forms a singularity before it disappears, its level set flow remains connected until its extinction time.

\begin{thm}\label{main}
Let $\Sigma\subset\mathbb R^{4}$ be a closed, connected hypersurface and let $\{\Gamma_t\}_{t\in[0,T]}$ be the level set flow with initial condition $\Gamma_0=\Sigma$ and extinction time $T$.  If $\lambda(\Sigma)\leq \lambda(\mathbb{S}^{2}\times\mathbb R)$, then, for all $t\in[0,T]$, $\Gamma_t$ is connected.  Moreover, if $W[t]=\mathbb R^{4}\setminus\Gamma_t$, then $W[t]$ has at most two connected components for all $t\in[0,T]$. 
\end{thm}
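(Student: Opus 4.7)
The plan is to organize both conclusions around a single connectedness statement about the bounded region enclosed by $\Gamma_t$. Since $\Sigma$ is closed and connected in $\R^4$, $W[0]$ has exactly two components: a bounded inside $\Omega^-[0]$ and an unbounded outside $\Omega^+[0]$. Evolving the closures of these regions as weak set flows and invoking the avoidance principle gives closed sets $\overline{\Omega^\pm[t]}$ with $\partial\Omega^\pm[t]\subseteq\Gamma_t$; of these, $\Omega^+[t]$ remains unbounded and hence connected through infinity. Provided the flow is non-fattening---which in the low-entropy regime should follow from a tangent-flow argument of Ilmanen type---one has $W[t]=\Omega^-[t]\sqcup\Omega^+[t]$, so both conclusions of Theorem \ref{main} reduce to showing $\Omega^-[t]$ is connected for every $t\in[0,T)$: the complement count is then immediate, and $\Gamma_t$, as the boundary of a bounded connected open with connected exterior in $\R^4$, is also connected by a standard topological lemma.

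To prove connectedness of $\Omega^-[t]$ I would argue by contradiction. Let $t_0\in(0,T)$ be the first time at which $\Omega^-[t]$ splits; short-time smoothness makes $t_0>0$, and a Hausdorff-continuity plus avoidance argument produces a point $x_0\in\Gamma_{t_0}$ approached from two distinct components of $\Omega^-[t]$ as $t\uparrow t_0$. At $(x_0,t_0)$ the flow is necessarily singular, and Huisken's monotonicity yields a tangent flow whose time $-1$ slice is a self-shrinker $\Sigma_\infty\subset\R^4$ with $\lambda(\Sigma_\infty)\leq \lambda(\Sigma)\leq \lambda(\mathbb{S}^2\times\R)$. The splitting geometry excludes the trivial tangent flows: a hyperplane would make the flow smooth at $(x_0,t_0)$ by Brakke regularity, and the round $\mathbb{S}^3$ models the extinction of an entire smooth component, producing no pinch of the interior. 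The remaining case is that $\Sigma_\infty$ is a noncompact low-entropy shrinker; the approach of $\Omega^-[t]$ to $x_0$ from opposite sides of $\Sigma_\infty$ should force $\Sigma_\infty$ to split off an $\R$-factor transverse to the pinching direction, and the cross-section, itself a low-entropy shrinker in $\R^3$, must be $\mathbb{S}^2$ by the $\R^3$ classification of \cite{BW3}, so that $\Sigma_\infty\cong \mathbb{S}^2\times\R$.

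The argument closes via a dichotomy on the entropy. If $\lambda(\Sigma)<\lambda(\mathbb{S}^2\times\R)$ strictly, then $\lambda(\Sigma_\infty)\leq\lambda(\Sigma)<\lambda(\mathbb{S}^2\times\R)$, contradicting $\Sigma_\infty\cong \mathbb{S}^2\times\R$. If instead $\lambda(\Sigma)=\lambda(\mathbb{S}^2\times\R)$, then the Gaussian density $\Theta(x_0,t_0)=\lambda(\mathbb{S}^2\times\R)$ saturates the global entropy bound, so Huisken's monotonicity of the $F$-functional centered at $(x_0,t_0)$ is identically equal on $[0,t_0)$, and the rigidity case forces the entire flow to be a self-similarly shrinking cylinder centered at $(x_0,t_0)$---contradicting the closedness of $\Sigma$.

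The principal obstacle is the identification $\Sigma_\infty\cong \mathbb{S}^2\times\R$: because no full classification of low-entropy shrinkers in $\R^4$ is available, the cylindrical structure must be extracted directly from the disconnection geometry of $\Omega^-[t]$ near $(x_0,t_0)$ rather than imposed from a classification list. I expect the sharp forward clearing-out lemma announced in the abstract to be the essential technical input here, upgrading the qualitative ``two-sided approach'' into a quantitative separation statement at the tangent-flow scale that splits $\Sigma_\infty$ across a hyperplane in its normal direction; Colding--Minicozzi-type splitting for shrinkers then supplies the $\R$-factor, and the $\R^3$ classification closes the identification. Establishing this forward clearing-out with the correct sharp constant is where I expect the bulk of the effort to lie.
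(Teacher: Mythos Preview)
Your proposal has a genuine gap at its core, and the strategy is inverted relative to the paper's logical structure.

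\textbf{The central gap.} Your argument hinges on identifying the tangent flow at a disconnection point as $\mathbb{S}^2\times\Real$. You acknowledge this is the ``principal obstacle,'' and you propose to overcome it using the sharp forward clearing-out lemma from the abstract. But this is circular: in the paper, the forward clearing-out result (Corollary~\ref{forward}) is \emph{deduced from} Theorem~\ref{main}, not used as an input to it. Independently of that circularity, the step ``two-sided approach of $\Omega^-[t]$ forces $\Sigma_\infty$ to split off an $\Real$-factor'' is not a known theorem. Colding--Minicozzi splitting for shrinkers requires a translational Jacobi field or an actual line in the shrinker; a qualitative two-sided approach of the interior region does not produce either. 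Without this identification your contradiction never fires, since there are (or at least may be) many asymptotically conical shrinkers in $\Real^4$ with entropy below $\Lambda_2$ that are not cylinders.

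\textbf{How the paper actually proceeds.} The paper never attempts to pin the tangent flow down as a cylinder. Instead it uses the dichotomy, imported from \cite{BW4}, that at every singular point of a low-entropy canonical boundary motion in $\Real^4$ the tangent flow is either a \emph{closed} shrinker or an \emph{asymptotically conical} one (Proposition~\ref{BWSumProp}). Closed tangent flows are ruled out before the extinction time because they force the whole connected flow to vanish. For asymptotically conical tangent flows, the paper proves a local structure statement (Item~(4) of Proposition~\ref{BWSumProp}): in a punctured ball about the singular point, $\spt(\mu_{t_0})$ is a connected hypersurface separating the ball into exactly two pieces, one inside and one outside $E_{t_0}$. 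This local two-sidedness, combined with White's component-tracking maps $\pi_{s,t}$ and a pigeonhole argument, rules out any jump in $n(t)$ (Proposition~\ref{leftcontstrong}). The general case, including possibly fattening flows, is then obtained by sandwiching $\Sigma$ between nearby strong canonical boundary motions $\Sigma_{s_i}$ from inside and outside and passing to the limit; your remark that non-fattening ``should follow from a tangent-flow argument'' is not justified, and the paper does not assume it.

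In short: replace the unavailable cylinder identification with the asymptotically-conical local structure from \cite{BW4}, and replace the direct argument on $\Sigma$ with an approximation by generic (strong canonical boundary motion) nearby flows.
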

A technical feature of the level set flow is that it may ``fatten", i.e., develop non-empty interior.  If this occurs in Theorem \ref{main}, then there will be a $T_0\in [0, T)$ so that $W[t]$ has two components for $t\in [0, T_0)$ and one component for $t\in [T_0, T]$ -- see Theorem \ref{RefinedMainThm} for a proof of this fact. Roughly speaking, the idea is that if the flow fattens, then there are two natural flows starting from $\Sigma$, the innermost flow and the outermost flow and the level set flow lies between these two flows. In this situation, $T_0$ is the extinction time of the inner flow and $T$ is the extinction time of the outer flow.

In \cite{W}, the second author showed, for flows of low entropy, a forward in time analog of the clearing out lemma. Specifically he showed that if such a flow reaches the point $x_0$ at time $t_0$, then the flow remains near $x_0$ after $t_0$ until it disappears.   This is a forward in time analog of the standard, unconditional, clearing out lemma -- e.g., \cite[Theorem 3.1]{ES3} -- that says that if the flow reaches $x_0$ at time $t_0$, then the flow must be near $x_0$ at earlier times.   Theorem \ref{main} allows us to sharpen the result from \cite{W} and prove the forward clearing out lemma in $\Real^4$ with the optimal upper bound on the entropy.
\begin{cor}\label{forward}
Given $\epsilon>0$, there exist uniform constants $C=C(\epsilon)>0$ and $\eta=\eta(\epsilon)>0$, so that if $\{M_t\}_{t\in [0, T]}$ is a a non-fattening level set flow in $\Real^4$ that starts from a smooth closed hypersurface $M_0\subset \mathbb R^{4}$ with $\lambda(M_0)\leq \lambda(\mathbb S^{2}\times \Real)-\epsilon$,  $x_0\in M_{t_0}$ and $M_{t_0+R^2}\neq\emptyset$, then for all  $\rho\in (0, \frac{R}{2C}),$
\begin{equation*}
\mathcal H^3(B_{\rho}(x_0)\cap M_{t_0+C^2\rho^2})\geq\eta\rho^3.
\end{equation*}
Here $\mathcal H^3$ denotes three-dimensional Hausdorff measure.
\end{cor}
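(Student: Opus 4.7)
The plan is to argue by contradiction via parabolic blow-up, in the style of \cite{W}, with Theorem \ref{main} as the key new input that allows the sharp threshold $\lambda(\mathbb S^2 \times \Real)$.

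Suppose no such uniform $C$ and $\eta$ exist. Then there are sequences $C_i \to \infty$, $\eta_i \to 0^+$, non-fattening level set flows $\{M^i_t\}$ from closed hypersurfaces with $\lambda(M^i_0) < \lambda(\mathbb S^2 \times \Real)$, base points $x_0^i \in M^i_{t_0^i}$, scales $R_i$, and $\rho_i \in (0, R_i/(2 C_i))$ with $M^i_{t_0^i + R_i^2} \neq \emptyset$ and
$$\mathcal H^3(B_{\rho_i}(x_0^i) \cap M^i_{t_0^i + C_i^2 \rho_i^2}) < \eta_i \rho_i^3.$$
Parabolically rescale by $\rho_i^{-1}$ centered at $(x_0^i, t_0^i)$; the rescaled flows $\tilde M^i$ preserve the entropy bound and the non-fattening property, satisfy $0 \in \tilde M^i_0$, remain nonempty on a time interval of length $\geq (2 C_i)^2 \to \infty$, and have $\mathcal H^3(B_1(0) \cap \tilde M^i_{C_i^2}) < \eta_i$. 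By Brakke--Ilmanen compactness under the uniform entropy bound, extract a subsequential Brakke limit $\mathcal M^\infty$ on $\Real^4 \times [0,\infty)$ with $\lambda(\mathcal M^\infty) \leq \lambda(\mathbb S^2 \times \Real)$ and $0 \in \spt \mathcal M^\infty_0$; via the standard backward clearing-out lemma applied at time $C_i^2$, one obtains $\spt \mathcal M^\infty_t \cap B_{1/2}(0) = \emptyset$ for all $t$ large, while $\spt \mathcal M^\infty_t$ stays nonempty for every $t \geq 0$.

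The contradiction combines connectedness with a tangent-flow analysis. Since Theorem \ref{main}, applied to each closed connected $M^i_0$, ensures that every time-slice $\tilde M^i_t$ is connected with complement having at most two components, these properties pass to the Hausdorff limit, so $\spt \mathcal M^\infty_t$ is connected for every $t \geq 0$. Let $t_* \in (0, \infty)$ be the last time the origin lies in $\spt \mathcal M^\infty_{t_*}$ (finite by the vanishing of mass at large $t$ and upper semi-continuity of the support). A tangent-flow analysis at $(0, t_*)$ exploits the entropy bound $\leq \lambda(\mathbb S^2 \times \Real)$ together with the inherited connectedness: the non-planar self-shrinker candidates $\mathbb S^3$ and $\mathbb S^2 \times \Real$ each would produce a shrinking or neck-pinching component near $0$ that disconnects the limit flow, contradicting Theorem \ref{main}. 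Hence the tangent flow at $(0, t_*)$ is a multiplicity-one static plane, so $\mathcal M^\infty$ is smooth at $(0, t_*)$ and smooth MCF persists on a small forward time interval, contradicting the definition of $t_*$.

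The principal obstacle is preserving both the non-fattening LSF structure and the connectedness of time-slices through the Brakke limit so that Theorem \ref{main} delivers the needed input for $\mathcal M^\infty$; in particular the tangent-flow analysis at $(0, t_*)$ must substitute for the full classification of low-entropy self-shrinkers in $\Real^4$ (which is not available) by using Theorem \ref{main}'s connectedness to rule out the non-planar candidates.
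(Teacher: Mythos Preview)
Your blow-up is centered at the wrong scale, and this breaks step 5. By rescaling by $\rho_i^{-1}$ you send the small-mass time to $C_i^2\to\infty$; for any fixed $T$, once $C_i^2>T$ the hypothesis $\mathcal H^3(B_1(0)\cap \tilde M^i_{C_i^2})<\eta_i$ places no constraint on $\tilde M^i|_{[0,T]}$, and there is no clearing-out lemma that propagates smallness of mass \emph{backward} in time. Nothing rules out, say, $\mathcal M^\infty$ being a static multiplicity-one plane through $0$ for all $t\ge 0$. The paper instead rescales by $(C_i\rho_i)^{-1}$ so that the bad time becomes $t=1$; then the limit has $\tilde\mu_1(\Real^4)=0$ and, being a matching motion, must form a collapsed singularity at some $t_e\le 1$.

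Steps 7 and 9 also fail. Connectedness of the increasingly non-compact slices $\tilde M^i_t$ does not pass to a Brakke or local-Hausdorff limit (a sequence of long U-shapes can converge locally to two disjoint hyperplanes). More seriously, under $\lambda\le\Lambda_2$ the non-planar tangent flows are \emph{not} only $\mathbb S^3$ and $\mathbb S^2\times\Real$: Proposition \ref{BWSumProp} says only that they are smooth and either closed or asymptotically conical, and an asymptotically conical shrinker at $(0,t_*)$ does not disconnect anything. Even a cylindrical neck-pinch need not disconnect a globally connected flow (a torus pinching to a sphere). So you cannot force the tangent flow at $(0,t_*)$ to be planar.

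The paper avoids all of this by applying Theorem \ref{main} to the \emph{original} closed flows, not to the limit. From the collapsed (hence, by Proposition \ref{BWSumProp}, compact) singularity of the limit at $t_e\le 1$, Brakke's regularity theorem produces, for large $i$, a compact singularity of $M^i_t$ at some $t_i<t_0+R_i^2$; since $M^i_{t_0+R_i^2}\neq\emptyset$ this would force a disconnection before $t_0+R_i^2$, contradicting Theorem \ref{main}. No connectedness-in-the-limit and no tangent-flow classification is needed.
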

\begin{rem}
The entropy assumption can be seen to be sharp by considering the translating bowl soliton in $\Real^4$ of larger and larger speed, and, in the closed setting, by considering a sequence of unit spheres at increasing distance from one another and joined by a thin tube.
\end{rem}

\section{Notation and Backgound}
Let $B_R(x_0)$ be the open ball in $\Real^{n+1}$ centered at $x_0$ and, for a set $K\subset \Real^{n+1}$, let $$ T_r(K)=\bigcup_{x\in K} B_{r}(x)$$
be the $r$-tubular neighborhood of $K$.  For any $\rho>0$, $x_0\in\mathbb R^{n+1}$ and subset $\Omega\subset\mathbb R^{n+1}$, set
\begin{equation*}
\begin{split}
\Omega+x_0&=\{x+x_0\in\mathbb R^{n+1}:x\in\Omega\}\mbox{ and }
\rho\Omega=\{\rho x:x\in\Omega\}.
\end{split}
\end{equation*}
Following \cite{CM}, the entropy of a closed hypersurface, $\Sigma$, is defined by
\begin{equation}\label{EntropyDef}
\lambda(\Sigma)=\sup_{(\mathbf y,\rho)\in\mathbb R^{n+1}\times\mathbb R}  F(\rho\Sigma+\mathbf y)
\end{equation}
where $F$ is the Gaussian area of $\Sigma$ given by
\begin{equation}
F(\Sigma) = (4\pi)^{-\frac{n}{2}}\int_\Sigma e^{-\frac{|x|^2}  {4}  } d\mathcal{H}^n.
\end{equation}
The entropy and Gaussian area readily extend to the less regular objects studied in geometric measure theory.  Clearly, $\lambda(\Real^n)=1$.
If $\mathbb{S}^n$ is the unit $n$-sphere in $\Real^{n+1}$, then
\begin{equation*}
\Lambda_k=\lambda(\mathbb S^k)=\lambda(\mathbb S^k\times\mathbb R^{n-k})=F(\sqrt{2k}\mathbb{S}^k)
\end{equation*}
and so, by a computation of Stone \cite{St},
\begin{equation}
2>\Lambda_1>\frac{3}{2}>\Lambda_2>\ldots>\Lambda_n>\ldots\rightarrow\sqrt2.
\end{equation}

Let us now briefly recall some background results in the theory of (weak) mean curvature flow -- our primary sources are \cite{ES1, ES2, ES3, ES4} and \cite{I1}.  We begin with the level set flow, whose mathematical theory  was developed by Chen-Giga-Goto \cite{CGG} and Evans-Spruck \cite{ES1,ES2,ES3,ES4}. 

Let $\Gamma$ be a non-empty compact subset of $\mathbb R^{n+1}$. Select a Lipschitz function $u_0$ so that $\Gamma=\{x:u_0(x)=0\}$ and so that
$u_0(x)=-C\mbox{ when } |x|\geq R$
for some constants $C,R>0$.
For such a $u_0$,  $\{u_0\geq a>-C\}$ is compact. In \cite{ES1}, Evans-Spruck established the existence and uniqueness of viscosity solutions to the initial value problem:
\begin{equation}\label{levelsetflow}
\begin{cases}
u_t=\Sigma_{i,j=1}^{n+1}(\delta_{ij}-u_{x_i}u_{x_j}|Du|^{-2})u_{x_ix_j}\:\:\:\mbox{ on }\:\mathbb R^{n+1}\times(0,\infty)\\
u=u_0\:\:\:\mbox{ on }\:\mathbb R^{n+1}\times\{0\}.
\end{cases}
\end{equation}
Setting $\Gamma_t=\{x:u(x,t)=0\}$, define $\{\Gamma_t\}_{t\geq0}$ to be the level set flow of $\Gamma=\Gamma_0$. As shown in \cite{ES1}, the $\Gamma_t$ depend only on $\Gamma$ and are independent of the choice of $u_0$.  The level set flow has a uniqueness property and satisfies an avoidance principle.  As such, for any closed initial set, the level set flow vanishes after a finite amount of time.  Furthermore, as long as the initial set is a closed hypersurface, the level set flow agrees with the classical solution to \eqref{MCF} as long as the latter exists. A technical feature of the level set flow is that some time slices may develop non-trivial interior -- a phenomena called ``fattening". Importantly, initial sets are generically non-fattening -- see for instance \cite[Theorem 11.3]{I1}

In addition to the level set flow, we will also need to consider the measure theoretic version of MCF introduced by Brakke. An \emph{$n$-dimensional Brakke flow} (or \emph{Brakke motion}), $\mathcal K$, in $\mathbb R^{n+1}$ is a family of Radon measures $\mathcal K = \{\mu_t\}_{t\in I}$, that satisfies \eqref{MCF} in the sense of being a negative gradient flow, see \cite{I1} for the precise definition. The Brakke flow is \emph{integral} if for almost every $t\in I$, $\mu_t\in \mathbf {IM}_n(\mathbb R^{n+1})$, that is, $\mu_t$ is an integer $n$-rectifiable Radon measure.  The Hausdorff $n$-measure, $\mathcal{H}^n$ restricted to any classical solution of \eqref{MCF} is an integral Brakke flow.

Denote the parabolic rescaling and translation of a Brakke flow $\mathcal K=\{\mu_t\}$ by
\begin{equation*}
\begin{split}
D_\rho\mathcal K&= \set{\mu^{\rho,0}_{\rho^{-2}t}}\mbox{ and }\mathcal K-(x_0,t_0)=\set{\mu^{1,x_0}_{t+t_0}}
\end{split}
\end{equation*}
where
$$
\mu^{\rho,x_0}(A)=\rho^{n}\mu(\rho^{-1} A+x_0).
$$
It follows from the Brakke's compactness theorem \cite[7.1]{I1} and the Huisken monotonicity formula \cite{H, I2} that given an integral Brakke flow $\mathcal K = \{\mu_t\}_{t\in I}$ with uniformly bounded area ratios, for any $t_0>\inf I$ and $x_0\in \Real^{n+1}$ and any sequence $\rho_i\to \infty$ there exists a subsequence $\rho_{i_j}\to \infty$ so that $D_{\rho_{i_j}}(\mathcal K-(x_0,t_0))$ converges (in the sense of Brakke flows -- see \cite{I1}) to a Brakke flow $\mathcal{T}=\set{\nu_t}_{t\in \mathbb{R}}$. We call such a flow a \emph{tangent flow} to $\mathcal{K}$ at $(x_0,t_0)$ and denote the set of all possible limits (for different sequences of scalings) by $\mathrm{Tan}_{(x_0,t_0)} \mathcal{K}$. By Huisken's monotonicity formula, $\mathcal{T}\in \mathrm{Tan}_{(x_0,t_0)} \mathcal{K}$ is backwardly self-similar. If $\nu_{-1}=\mathcal{H}^n \measrestrict \Upsilon$ for a smooth hypersurface $\Upsilon$, then $\Upsilon$ satisfies the equation
\begin{equation}
\label{SSEqn}
\mathbf H_\Upsilon + \frac{\mathbf x^\perp}{2} =0.
\end{equation}
Any hypersurface, $\Upsilon$, that satisfies \eqref{SSEqn} is called a \emph{self-shrinker} and is \emph{asymptotically conical} if $\lim_{\rho\to 0} \rho \Upsilon =C$ in $C^{\infty}_{loc}(\Real^{n+1}\backslash \set{0})$ for some regular cone $C$.  For instance, any hyperplane through the origin is an asymptotically conical self-shrinker.

A feature of Brakke flows is that they may suddenly vanish.  In order to handle technical issues that arise from this possibility we will need Ilmanen's enhanced motions \cite[8.1]{I1}\cite{W5}. Following the formulation in \cite{W5},  a pair $(\tau,\mathcal K)$ is an \emph{enhanced motion}, if $\tau\in\mathbf I_{n+1}^{loc}(\mathbb R^{n+1}\times\mathbb R)$ is a locally $(n+1)$-dimensional integral current in space-time and $\mathcal K=\{\mu_t\}_{t\in\mathbb R}$ is a Brakke flow that together  satisfy 
\begin{enumerate}
\item  $\partial \tau = 0$ and $\partial (\tau_{t\geq s})=\tau_s$ and $\tau_t\in\mathbf I_{n}(\mathbb R^{n+1})$ for each time slice $t$
\item $\partial \tau_t = 0$ for all $t$  
\item $t\mapsto \tau_t$ is continuous in the flat topology
\item $\mu_{\tau_t} \leq\mu_t$ for all $t$ 
\item $V_{\mu_t}=V_{\tau_t}+2W_t$ for some integral varifold $W_t$ for a.e. $t$. In other words, they are compatible for a.e. $t$ as defined in \cite{W5}.
\end{enumerate}
Here $\tau$ is the called the \emph{undercurrent} and $\mathcal K$ is the \emph{overflow}. Likewise $(\tau, \mathcal{K})$ is an \emph{enhanced motion} with initial condition $\tau_0\in \mathbf{I}_n(\Real^{n+1}\times\set{t_0})$ if the above holds  for all $t\geq t_0$ and $\partial \tau=\tau_0$. An enhanced motion $(\tau,\mathcal K)$ is a \emph{matching motion} if $\mu_{\tau _t}=\mu_t$ for a.e. $t$ for which this makes sense. 

%

Associated to each $E\subset \Real^{n+1}\times \Real$ of locally finite perimeter, there is a unique $(n+2)$-dimensional integral current $[E]\in \mathbf{I}_{n+2}^{loc}(\Real^{n+1}\times \Real)$. Similarly, given an  oriented dimension-$k$ submanifold $\Sigma\subset \Real^{n+1}\times \Real$ there is a unique $[\Sigma]\in \mathbf{I}_k^{loc}(\Real^{n+1}\times \Real)$.  If $\partial^*E$ is the reduced boundary of $E$, then $[\partial^* E]=\partial[E]\in \mathbf{I}_{n+1}^{loc}(\Real^{n+1}\times \Real)$.  As such, there is an integer $(n+1)$-rectifiable Radon measure $\mathcal H^n\measrestrict\partial^* E$ -- see \cite{I1} for details. In what follows when we refer to a set of finite perimeter, we mean a specific set, $E$, that has finite perimeter, not  an equivalence class of sets.  In particular, we may \emph{a priori} have $\overline{\partial^* E} \neq \partial E$.

We extend the notion of canonical boundary motion from \cite{BW2} -- see also \cite{I1, BW4}.  These flows are special cases of flows introduced by Ilmanen in \cite{I1} that synthesize the level set flow and Brakke flow in a natural way and are key to our approach. 
\begin{defn}
 A \emph{canonical boundary motion} is a triple $(E_0,E,\mathcal K)$ consisting of an open bounded set $E_0\subset \Real^{n+1}\times \set{0}$ with $\partial E_0$ a smooth closed hypersurface, an open bounded set $E\subset \mathbb R^{n+1}\times\mathbb [0, \infty)$ of finite perimeter and a Brakke flow $\mathcal K=\{\mu_t\}_{t\geq 0}$ so:
\begin{enumerate}
\item $E=\{(x,t):u(x,t)>0\}$, where $u$ solves equation (\ref{levelsetflow}) with $u_0$ chosen so $E_0=\{x:u_0(x)>0\}$ and $\partial E_0=\{x:u_0(x)=0\}$;
\item The level set flow of $\partial E_0$ is non-fattening;
\item  For $t\geq 0$, each $E_t=\{x:(x,t)\in E\}$ is of finite perimeter and $\mu_t=\mathcal H^n\measrestrict\partial^*E_t$.
\end{enumerate}
If, in addition, 
\begin{enumerate}[resume*]
\item  $\set{u=0}=\overline{\partial^* E}$  in $\Real^{n+1}\times (0,\infty)$,
\end{enumerate}
where $u$ is from Item (1), then $(E_0,E,\mathcal{K})$ is a \emph{strong canonical boundary motion}.
\end{defn}
\begin{rem} \label{StrongBndryRem} Observe,  $\set{u>0}= E\subset\bar{E}\subset \set{u\geq 0}$ for a canonical boundary motion and $\bar{E}= \set{u\geq 0}$ for a strong canonical boundary motion.  If $\Gamma_t=\{x\in \Real^{n+1}| u(x,t)=0\}$, then $\set{\Gamma_t}_{t\geq 0}$ is the level set flow of $\Gamma_0=\Sigma$ and is non-fattening. Clearly, $\partial E_t\subset \Gamma_t$, but equality need not hold -- even for strong canonical boundary motions.  For instance, at the extinction time of any compact flow one does not have equality. 
\end{rem}
By \cite[11.4]{I1}, for a $E_0$ with the property that the level set flow of $\partial E_0$ is non-fattening, there are $E$ and $\mathcal{K}$ so $(E_0,E,\mathcal{K})$ is a canonical boundary motion.  In general, the non-fattening condition is not enough to ensure the existence of a strong canonical boundary motion, however, in \cite[12.11]{I1},  Ilmanen shows such existence for ``generic" $E_0$.
%
%


Finally, we introduce the following notation for a level set flow $\{\Gamma_t\}_{t\geq 0}$ in $\Real^{n+1}$, $n\geq 1$,
\begin{equation*}
\begin{split}
W[t]&=\mathbb R^{n+1}\setminus\Gamma_t\\
W[s,r]&=\{(x,t)|x\in(\mathbb R^{n+1}\setminus\Gamma_t),s\leq t\leq r\}=\bigcup_{t\in [s,r]} W[t]\\
n(t)&=\#\{\text{connected components of $W[t]$}\}\in \mathbb{N}\cup\set{\infty}.
\end{split}
\end{equation*}
As $\Gamma_t$ is compact and $n\geq 1$, there is exactly one unbounded component of $W[t]$, denoted by $W^-[t]$. Let $W^+[t]=W[t]\backslash W^-[t]$ be the bounded components and set
$$
W^\pm[s,r]=  \bigcup_{t\in [s,r]} W^\pm [t]. 
$$

\section{Proof of Theorem \ref{main} for strong canonical boundary motions}
In this section we show Theorem \ref{main} for flows that are strong canonical boundary motions. We begin with several preliminary results. 
The first is an elementary topological result -- we include a proof for the sake of completeness.
\begin{lem}\label{TopLem}
Let $\Gamma\subset \Real^{n+1}$ be a compact set.  If  $\Real^{n+1}\backslash \Gamma$ has exactly two components, $W^\pm$, and $\Gamma=\partial W^\pm$, then $\Gamma$ is connected.
\end{lem}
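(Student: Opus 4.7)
The plan is to argue by contradiction. Assume $\Gamma = \Gamma_1 \sqcup \Gamma_2$ for disjoint, nonempty, closed sets $\Gamma_1, \Gamma_2$, and derive an incompatibility with the simple connectedness of $\mathbb{R}^{n+1}$.

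The key observation is that each of the open sets $U_j := \mathbb{R}^{n+1} \setminus \Gamma_j$, $j = 1, 2$, is connected. This exploits the hypothesis $\Gamma = \partial W^+ = \partial W^-$ in a crucial way: since $\Gamma_i \subset \overline{W^\pm}$ while $W^\pm$ is connected, one has the sandwich
\[
W^\pm \;\subset\; W^\pm \cup \Gamma_i \;\subset\; \overline{W^\pm},
\]
and any set between a connected set and its closure is connected. Hence both $W^+ \cup \Gamma_i$ and $W^- \cup \Gamma_i$ are connected. So for $\{i,j\} = \{1,2\}$,
\[
U_j = (W^+ \cup \Gamma_i) \cup (W^- \cup \Gamma_i)
\]
is a union of two connected sets sharing the nonempty set $\Gamma_i$, and is itself connected.

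Now $\{U_1, U_2\}$ is an open cover of $\mathbb{R}^{n+1}$ (because $\Gamma_1 \cap \Gamma_2 = \emptyset$) whose pairwise intersection $U_1 \cap U_2 = \mathbb{R}^{n+1} \setminus \Gamma = W^+ \sqcup W^-$ has exactly two components. The reduced Mayer--Vietoris sequence
\[
\tilde{H}_1(\mathbb{R}^{n+1}) \longrightarrow \tilde{H}_0(U_1 \cap U_2) \longrightarrow \tilde{H}_0(U_1) \oplus \tilde{H}_0(U_2)
\]
then reads $0 \to \mathbb{Z} \to 0$, which fails to be exact, producing the desired contradiction. Equivalently, one may invoke the unicoherence of $\mathbb{R}^{n+1}$ applied to the decomposition $\mathbb{R}^{n+1} = \overline{W^+} \cup \overline{W^-}$, both sides being closed and connected with intersection exactly $\Gamma$. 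The main conceptual step is the sandwich observation in the second paragraph, which is where the hypothesis $\Gamma = \partial W^\pm$ enters; the remaining algebraic-topological input is standard, and I expect no substantive obstacle beyond getting the sandwich lemma right.
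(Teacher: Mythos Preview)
Your proof is correct and follows the same overall strategy as the paper: argue by contradiction, produce an open cover of $\Real^{n+1}$ by two connected sets whose intersection is disconnected, and derive a contradiction from the reduced Mayer--Vietoris sequence (using $\tilde H_1(\Real^{n+1})=0$).

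The only real difference is the choice of cover, and the two choices are in a sense dual. The paper takes $\hat W^\pm = W^\pm \cup T_r(\Gamma)$, so the cover is indexed by the two components of the complement and the intersection is the (disconnected) thickened $\Gamma$. You take $U_j=\Real^{n+1}\setminus\Gamma_j$, so the cover is indexed by the two pieces of $\Gamma$ and the intersection is the (disconnected) complement $W^+\sqcup W^-$. Correspondingly, the paper verifies connectedness of its pieces by attaching balls $B_r(x)$ meeting $W^\pm$ (using $\Gamma=\partial W^\pm$), while you use the sandwich $W^\pm\subset W^\pm\cup\Gamma_i\subset\overline{W^\pm}$ (using $\Gamma_i\subset\partial W^\pm\subset\overline{W^\pm}$). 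Your version avoids the tubular neighborhood and is marginally cleaner; the paper's version has the mild advantage that all sets involved are open, so one need not worry about which variant of Mayer--Vietoris applies (though your $U_1,U_2$ are open too, so this is not an issue). Either way the argument is sound.
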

\begin{proof}
	Suppose that $\Gamma$ is not connected.  Let $K$ be one component of $\Gamma$ and $K'=\Gamma\backslash K\neq \emptyset$.  Observe that both $K$ and $K'$ are compact and so there is a $r>0$ so that $T_r(K)\cap T_r(K')=\emptyset$ and, hence, $T_r(\Gamma)$ is not connected.   Let $\hat{W}^\pm =W^\pm \cup T_r(\Gamma)$.  Clearly, $\hat{W}^\pm$ are open sets with $\hat{W}^+\cap \hat{W}^-=T_r(\Gamma)$.  For each $x\in \Gamma$, $W^\pm \cap B_r(x)\neq \emptyset$ as $\Gamma=\partial W^\pm$. As the union of intersecting connected sets is connected, $W^\pm\cup B_r(x)$ is connected.  It readily follows that both $\hat{W}^-$ and $\hat{W}^+$ are connected.  Finally, by the Mayer-Vietoris long exact sequence for reduced homology, as $\Real^{n+1}=\hat{W}^+\cup \hat{W}^-$ is simply connected and both $\hat{W}^\pm$ are connected, $T_r(\Gamma)=\hat{W}^+\cap \hat{W}^-$ must be connected.  This contradicts our choice of $r$ and proves the lemma. 
	\end{proof}
Another elementary fact is that the level set flow remains connected up to and including its first disconnection time.
\begin{lem}\label{LevSetConnLem}
Let $\set{\Gamma_t}_{t\in [0,T]}$ be a level set flow of compact sets in $\Real^{n+1}$.  If $\Gamma_t$ is connected for $t\in [0, t_0)$, then $\Gamma_{t_0}$ is connected. 
\end{lem}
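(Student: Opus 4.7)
The plan is to show that $\Gamma_s\to\Gamma_{t_0}$ in the Hausdorff distance as $s\to t_0^-$; once this is in hand, the classical fact that a Hausdorff limit of non-empty compact connected sets contained in a fixed ball is itself connected will give the conclusion. If $\Gamma_{t_0}$ is empty there is nothing to prove, so we may assume $\Gamma_{t_0}\neq\emptyset$; then comparison with a large enclosing shrinking sphere (via the avoidance principle) forces $\Gamma_t\neq\emptyset$ for every $t\in[0,t_0]$.

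For upper semicontinuity I would appeal directly to the continuity of the viscosity solution $u$ of \eqref{levelsetflow}: if $t_i\to t_0^-$ and $x_i\in\Gamma_{t_i}$ with $x_i\to x$, then $u(x,t_0)=\lim u(x_i,t_i)=0$, so $x\in\Gamma_{t_0}$. This yields $\Gamma_s\subset T_\epsilon(\Gamma_{t_0})$ once $s$ is close enough to $t_0$, for every $\epsilon>0$. For the matching lower semicontinuity, I would compare $\{\Gamma_t\}$ with round shrinking spheres. Given $y_0\in\Gamma_{t_0}$, let $\Sigma(s)=\partial B_{\rho(s)}(y_0)$ with $\rho(s)=\sqrt{2n(t_0-s)}$; this is the level set flow of a round sphere that collapses exactly to the point $y_0$ at time $s=t_0$. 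Since $y_0\in\Gamma_{t_0}\cap\Sigma(t_0)$, if there were any $s<t_0$ with $\Gamma_s\cap\Sigma(s)=\emptyset$ the avoidance principle applied on $[s,t_0]$ would force $\Gamma_{t_0}\cap\Sigma(t_0)=\emptyset$, a contradiction. Hence $\Gamma_s\cap\Sigma(s)\neq\emptyset$ for every $s<t_0$, which gives $\dist(y_0,\Gamma_s)\leq\rho(s)\to 0$ as $s\to t_0^-$.

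Combining the two bounds yields $\Gamma_s\to\Gamma_{t_0}$ in Hausdorff distance from the left, and the topological fact above then delivers connectedness of $\Gamma_{t_0}$. The main obstacle is the lower semicontinuity step: one has to be careful about applying the avoidance principle through the instant at which the test sphere collapses, and the cleanest route is to use it on intervals $[s,t_0-\delta]\cup\{t_0\}$ or, equivalently, to exploit the contrapositive form ``intersecting at the final time forces intersecting throughout''. Once this bookkeeping is in place the remainder of the argument is straightforward point-set topology.
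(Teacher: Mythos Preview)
Your proposal is correct and follows essentially the same route as the paper: both establish Hausdorff convergence $\Gamma_s\to\Gamma_{t_0}$ from the left by using the avoidance principle with shrinking spheres for the inclusion $\Gamma_{t_0}\subset T_{\rho(s)}(\Gamma_s)$ and the continuity of $u$ (equivalently, closedness of the space-time track) for the inclusion $\Gamma_s\subset T_\epsilon(\Gamma_{t_0})$, and then conclude via the standard fact that Hausdorff limits of compact connected sets are connected. Your write-up simply spells out the avoidance step in more detail than the paper's one-line invocation.
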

\begin{proof}
By the definition and basic properties of level set flow $\lim_{t\to t_0^-} \Gamma_t=\Gamma_{t_0}$ in Hausdorff distance.  Indeed, on the one hand, by the avoidance principle, 
$$\Gamma_{t_0}\subset T_{\sqrt{4n(t_0-t)}}(\Gamma_{t}).$$ 
On the other, as the space-time track of the level set flow, $\Real^{n+1}\times[0,T]\backslash W[0,T]$, is closed and $\Gamma_{t_0}$ is compact, for every $\epsilon>0$, there is a $\delta>0$ so that if $0<t_0-t<\delta$, then $\Gamma_t\subset T_{\epsilon}(\Gamma_{t_0})$.  
 Hence, if $\Gamma_{t_0}$ is disconnected, then for $t<t_0$ close enough to $t_0$, $\Gamma_t$ is disconnected, proving the claim.
\end{proof}
 The next result summarizes and extends \cite{BW4} and provides a description of the regularity properties of strong canonical boundary motions flows in $\Real^4$ of low entropy. 
\begin{prop} \label{BWSumProp}
Let $\left(E_0, E, \mathcal{K}=\set{\mu_t}_{t\geq 0}\right)$ be a strong canonical boundary motion in $\Real^4$.  Suppose the flow has extinction time $T$ and $\Sigma_0=\partial E_0$  satisfies $\lambda(\Sigma_0)< \Lambda_2$. 
\begin{enumerate}
\item For each $t\in [0,T)$, there are a finite, possibly empty, set of points $x_1,\ldots, x_{m(t)}\in\Real^4$ so that $\mu_t=\mathcal{H}^3\measrestrict \Sigma_t$ where $\Sigma_t$ is a hypersurface in $\Real^4\backslash \set{x_1, \ldots, x_m}$.
\item For an open dense subset $I\subset [0,T]$, if $t\in I$, then $\mu_t=\mathcal{H}^3\measrestrict \Sigma_t$ where $\Sigma_t$ is a closed hypersurface. That is, $m(t)=0$.
\item Let $(x_0,t_0)\in \Real^4\times(0,T]$ be a point at which $\mathcal{K}$ has positive Gaussian density. If $\set{\nu_t}_{t\in \Real}=\mathcal{T}\in \mathrm{Tan}_{(x_0,t_0)}\mathcal{K}$, then $\nu_{-1}=\mathcal{H}^3 \measrestrict \Upsilon$ where $\Upsilon$ is a smooth self-shrinker and either $\Upsilon$ is closed or it is asymptotically conical.  Moreover, whichever holds depends only on $(x_0,t_0)$ and not on the choice of tangent flow.
\item For each $(x_0,t_0)\in \Real^4\times(0,T]$ for which $\mathrm{Tan}_{(x_0,t_0)}\mathcal{K}$ contains an asymptotically conical shrinker, there is an $R_0=R_0(x_0,t_0,\partial E_0)>0$ so that for all $R\in (0,R_0]$
$$\Sigma_{t_0}(x_0,R)=\spt(\mu_{t_0} )\cap B_{R}^*(x_0)= \Sigma_{t_0}\cap  B_{R}^*(x_0) =\partial E_{t_0} \cap  B_{R}^*(x_0)=\partial^* E_{t_0} \cap  B_{R}^*(x_0),
$$
is a connected hypersurface that divides $B_R^*(x_0)$ into two components, one contained in $E_t$ and one disjoint from it.  Here $ B_{R}^*(x_0)= B_{R}(x_0)\backslash \set{x_0}$.
\end{enumerate}
\end{prop}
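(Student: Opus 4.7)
The plan is to reduce most of the statement to the classification results and Brakke regularity developed in \cite{BW4}, with the main extension being the local structural description in Item (4).  Throughout, the key input is that every tangent flow at any space-time point inherits an entropy bound $\lambda(\nu_{-1})\leq \lambda(\Sigma_0)<\Lambda_2$, and the low-entropy classification of self-shrinkers in $\Real^4$ restricts the tangent shrinker to be either a hyperplane, the round $\sqrt{6}\, \mathbb{S}^3$, or an asymptotically conical self-shrinker.  Importantly, none of these models splits off a line, so by Brakke regularity and the Almgren--White stratification every such tangent flow has a $0$-dimensional singular set.

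For Items (1) and (2), this stratification gives that, at each $t\in[0,T)$, the singular set of $\mu_t$ is $0$-dimensional.  A standard covering argument using Huisken's monotonicity formula together with the uniform entropy bound promotes this to finiteness in each time slice, proving (1).  For (2), the partial regularity theorem for Brakke flows implies smoothness at almost every time; pseudolocality and short-time persistence of smoothness make this set open; and continuity of the extinction time gives density in $[0,T]$.

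For Item (3), the classification immediately leaves only two non-trivial possibilities.  A planar tangent flow is excluded at a positive Gaussian density point because Brakke regularity would then force $(x_0,t_0)$ to be a smooth point with a well-defined tangent plane.  The delicate part is independence of the type from the blow-up sequence, and I would handle this dichotomously: if some tangent flow has a closed shrinker $\Upsilon$, then $\mathcal{K}$ concentrates all nearby mass at $(x_0,t_0)$ and vanishes on a forward neighborhood of $x_0$, which rules out an asymptotically conical tangent (the latter carries mass past $t_0$); conversely, if some tangent is asymptotically conical, the flow cannot vanish in a neighborhood of $x_0$ for $t>t_0$, precluding a closed tangent.

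For Item (4), having established an asymptotically conical tangent $\Upsilon$, smooth convergence of $D_{\rho_i}(\mathcal{K}-(x_0,t_0))$ in any fixed annulus, combined with pseudolocality, shows that for some $R_0>0$ and every $R\in(0,R_0]$ the support $\spt(\mu_{t_0})\cap B_R^*(x_0)$ is a smooth connected hypersurface modeled on the cone of $\Upsilon$ and separating $B_R^*(x_0)$ into two components.  The chain of equalities $\spt(\mu_{t_0})\cap B_R^*(x_0)=\partial E_{t_0}\cap B_R^*(x_0)=\partial^* E_{t_0}\cap B_R^*(x_0)$ is where the \emph{strong} canonical boundary motion hypothesis enters decisively: the identity $\{u=0\}=\overline{\partial^* E}$ prevents extra zero-set material on the $E$-side, while the matching $\mu_{t_0}=\mathcal{H}^3\measrestrict\partial^* E_{t_0}$ forces the reduced boundary to coincide with the smooth support.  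I expect the main obstacle to lie precisely here, in ruling out any hidden accumulation of $\overline{\partial^* E_{t_0}}$ away from the smooth hypersurface, since without the strong hypothesis one has only the inclusions of Remark~\ref{StrongBndryRem} rather than equality.
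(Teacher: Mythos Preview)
Your treatment of Items (1)--(3) is essentially what the paper does: it simply cites the relevant results of \cite{BW4} (Theorem~4.3, Corollary~4.4, Proposition~4.1, Lemma~4.2), and your sketch rehearses the ideas behind those results.  One minor slip: a hyperplane \emph{is} asymptotically conical, so there is no need to ``exclude'' planar tangent flows in Item~(3); they fall under the conical alternative.

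For Item~(4) there is a genuine gap.  You work entirely at the single time~$t_0$, arguing that $\spt(\mu_{t_0})\cap B_R^*(x_0)$ is smooth and then invoking the strong hypothesis $\{u=0\}=\overline{\partial^*E}$ to rule out extra boundary.  But that hypothesis is a \emph{space-time} identity, and slicing it at $t=t_0$ does not directly yield $\Gamma_{t_0}=\overline{\partial^*E_{t_0}}$ or $\partial E_{t_0}=\overline{\partial^*E_{t_0}}$ in the punctured ball.  Concretely, nothing you have written rules out an isolated zero of $u(\cdot,t_0)$ at some $y\in B_R^*(x_0)$ away from the smooth hypersurface: such a $y$ would lie in $\partial E_{t_0}$ and in $\Gamma_{t_0}$ but not in $\partial^*E_{t_0}$, and the space-time identity only tells you $(y,t_0)\in\overline{\partial^*E}$, which could be witnessed by points at nearby times $t\neq t_0$.  (Your closing sentence locates the difficulty in the right place but inverts it: the issue is not accumulation of $\overline{\partial^*E_{t_0}}$---that set is already the smooth support---but extra \emph{topological} boundary of $E_{t_0}$ not seen by the reduced boundary.)

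The paper closes this gap by first invoking \cite[Theorem~4.2(2)]{BW4} to get smoothness of $\spt(\mu_t)$ in the annulus for \emph{all} nearby times $t\in(t_0-\rho^2,t_0+\rho^2)$ and all small~$\rho$, so that the space-time track $A(x_0,t_0,\rho)$ is a smooth connected hypersurface in a hollow space-time cylinder, meeting each time slice transversally.  One then applies the strong hypothesis in space-time to identify $A$ with $\partial E$ and $\partial^*E$ there, and finally uses the transversality to slice back down to $t_0$.  Both ingredients---smoothness for a full time interval (not just at $t_0$) and the transversal slicing---are essential and are absent from your outline.
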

\begin{proof} Note first that as $(E_0,E, \mathcal{K})$ is a strong canonical boundary motion, $(E,\mathcal{K})$ is a canonical boundary motion in the sense of \cite{BW4} -- see Theorem 2.3 and the discussion at the beginning of Section 4 of \cite{BW4}. 
As such, Items (1) and (2) are both immediate consequences of \cite[Theorem 4.3]{BW4} -- see \cite[Corollary 4.4]{BW4} and the proof of \cite[Theorem 4.5]{BW4} for details.  Item (3) follows from \cite[Proposition 4.1 and Lemma 4.2]{BW4}.

It remains to show Item (4).  First, set $\epsilon_0=\Lambda_2-\lambda(\partial E_0)>0$. Next observe that if $(x_0,t_0)$ is a singular point of $\mathcal{K}$, then, by hypothesis, it is a non-compact singularity and so, by
\cite[Theorem 4.2(2)]{BW4}, there is a $\alpha=\alpha(\epsilon_0)>0$ and a $\rho_0=\rho_0(x_0,t_0)>0$ so that for all $(\rho,t)\in (0,\rho_0)\times (t_0-\rho^2, t_0+\rho^2)$,
$$
A_t(x_0,t_0,\rho)=\Sigma_t \cap\left( B_{2\alpha \rho}(x_0)\backslash \bar{B}_{\frac{1}{2}\alpha \rho}(x_0)\right)=\spt(\mu_t)\cap \left(B_{2\alpha \rho}(x_0)\backslash \bar{B}_{\frac{1}{2}\alpha \rho}(x_0)\right)
$$
is a connected non-empty hypersurface that is proper in $B_{2\alpha \rho}(x_0)\backslash \bar{B}_{\frac{1}{2}\alpha \rho}(x_0)$. The same is true if $(x_0,t_0)$ is not a singular point as then $\mathrm{Tan}_{(x_0,t_0)}\mathcal{K}$ consists of a static hyperplane.  
For $\rho\in (0,\rho_0)$, let
$$
A(x_0,t_0,\rho)=\bigcup_{t\in(t_0-\rho^2, t_0+\rho^2) }A_t(x_0,t_0,\rho)\times\set{t}\subset \Real^4\times \Real=\Real^5
$$
this is a connected non-empty hypersurface  that is proper in the hollow space-time cylinder
$$
C(x_0,t_0,\rho)=\left(B_{2\alpha \rho}(x_0)\backslash \bar{B}_{\frac{1}{2}\alpha \rho}(x_0)\right) \times (t_0-\rho^2, t_0+\rho^2 ).
$$
Clearly,
$A_t(x_t,t_0,\rho)\times \set{t}=A(x_0,t_0,\rho)\cap \left(\Real^4\times \set{t}\right)$ and this intersection is transverse.

By Item (3) of the definition of canonical boundary motion, $\spt(\mu_{t})=\overline{\partial^*E_{t}}$, and so
$$
A(x_0,t_0,\rho)=\overline{\partial^*E}\cap C(x_0,t_0,\rho).
$$
As $A(x_0,t_0,\rho)$ is smooth, every point is in the reduced boundary and so
$$
A(x_0,t_0,\rho)={\partial^*E}\cap C(x_0,t_0,\rho).
$$
Hence, by Item (4) of the definition of a strong canonical boundary motion,
$$
A(x_0,t_0,\rho)={\partial^*E}\cap C(x_0,t_0,\rho)=\overline{\partial^*E}\cap C(x_0,t_0,\rho)=\partial E \cap C(x_0,t_0,\rho).
$$
Together with the fact that that $A(x_0,t_0,\rho)$ meets $ \Real^4\times \set{t_0}$ transversally, this means
$$
A_{t_0}(x_0,t_0,\rho)=\partial^* E_{t_0} \cap \left(B_{2\alpha \rho}(x_0)\backslash \bar{B}_{\frac{1}{2}\alpha \rho}(x_0)\right)=\partial E_{t_0} \cap\left(B_{2\alpha \rho}(x_0)\backslash \bar{B}_{\frac{1}{2}\alpha \rho}(x_0)\right).
$$

Set $R_0=2\alpha \rho_0$ and, for any $R\in (0,R_0)$, let
$$
\Sigma_{t_0}(x_0,R)=\bigcup_{i=0}^\infty A_{t_0}(x_0,t_0,2^{-i} R).
$$
By the above, $\Sigma_{t_0}(x_0,R)$ is a connected non-empty hypersurface proper in  $B_{R}^*(x_0)$ and, moreover,
$$
\Sigma_{t_0}(x_0,R)=\partial^* E_{t_0} \cap  B_{R}^*(x_0)= \partial E_{t_0} \cap  B_{R}^*(x_0) =\spt(\mu_{t_0} )\cap B_{R}^*(x_0)= \Sigma_{t_0}\cap  B_{R}^*(x_0).
$$
Finally, as $\Sigma_{t_0}(x_0,R)$ is connected, non-empty and proper in $ B_{R}^*(x_0)$, $B_{R}^*(x_0)\backslash \Sigma_{t_0}(x_0,R)$ has two components.  On the one hand, $\Sigma_{t_0}(x_0,R)\subset \partial E_{t_0}$ implies at least one of these is a subset of $E_t$.  On the other, $\Sigma_{t_0}(x_0,R)\subset \partial^* E_{t_0}$ means the other is disjoint from $E_{t_0}$.
\end{proof}

%
%
%
%
%

Next we use the above regularity properties to relate the level set flow and its interior for strong canonical boundary motions of low entropy -- compare with Remark \ref{StrongBndryRem}.
\begin{prop}\label{StrongBdryMotProp}
Let $(E_0, E, \mathcal{K}=\set{\mu_t}_{t\geq 0})$ be a strong canonical boundary motion in $\Real^4$ with $\lambda(\partial E_0)<\Lambda_2$  and let $\set{\Gamma_t}_{t\in [0,T]}$ be the level set flow with $\Gamma_0=\partial E_0$. For any $s\in (0, T]$ there are a finite, possible empty, set of isolated points of $\Gamma_s$,  $p_1, \ldots, p_{M(s)}$, so that $\mathcal{K}$ has a closed singularity at $(p_i,s)$ and
$$
\Gamma_s\setminus \set{p_1, \ldots, p_{M(s)}}=\spt(\mu_s)=\partial {E}_s\setminus \set{p_1, \ldots, p_{M(s)}}=\partial (\Real^4\backslash \bar{E}_s).
$$ 
If, in addition, $\Gamma_s$ is connected and not a point, then $E_s=W^+[s]$ and $\Gamma_s=\partial W^\pm[s].$
\end{prop}
\begin{proof}
 As the level set flow is the biggest flow,  $\spt(\mu_t)\subset \Gamma_t$ -- see \cite[10.7]{I1}.  Pick a $s\in (0, T]$,  the entropy assumption ensures that there are at most a finite set of points $p_1, \ldots, p_{M(s)}\in \Real^4$ so $\mathcal{K}$ has a closed singularity at $(p_i,s)$ -- see \cite[Theorem 4.3 and Corollary 4.4]{BW4}. Moreover, there are radii $r_i>0$ so that the Gaussian density of $\mathcal{K}$ at any $(p,s)$ with $p\in B_{r_i}(p_i)\setminus\set{p_i}$ is zero.  Hence, $p_i\not\in \spt(\mu_s)$ and so $\spt(\mu_s)\subset \Gamma_s\setminus\set{p_1, \ldots, p_{M(s)}}$.  
 	
Now pick a $x_0\in \Gamma_s\backslash \set{p_1, \ldots, p_{M(s)}}$.  Let $\mathcal{T}\in \mathrm{Tan}_{(x_0,s)}\mathcal{K}$ be a tangent flow to $\mathcal{K}$ at the point $(x_0,s)$. By Item (4) of the definition of strong canonical boundary motion,  $(x_0,s)\in \overline{\partial^* E}$.  Hence, there is a sequence $(x_i, s_i)\in \partial^* E$ with $s_i>0$ and $\lim_{i\to \infty} (x_i, s_i)= (x_0,s)$.  As $(x_i, s_i)\in \partial^* E$, the Gaussian density of $\mathcal{K}$ at $(x_i,s_i)$ is at least $1$ and so, by the upper semicontinuity property of Gaussian density, the Gaussian density of $\mathcal{K}$ at $(x_0,s)$ is positive and so $\mathcal{T}$ is non-trivial.  Hence, by Item (3) of Proposition \ref{BWSumProp} and the fact that $x_0\neq p_i$ for any $1\leq i \leq M(s)$, $\mathcal{T}=\set{\nu_t}_{t\in \Real}$ is asymptotically conical.  A further consequence is that the $p_i$ are isolated points of $\Gamma_s$.
 
Thus, Item (4) of Proposition \ref{BWSumProp} implies that there is a $R_0>0$ so for all $R\in (0,R_0)$,  $\spt(\mu_s)\cap B_{R}^*(x_0)$ is  non-trivial.  As $\spt(\mu_s)$ is closed, this means that $x_0\in \spt(\mu_s)$ and hence, $\spt(\mu_s)=\Gamma_t\setminus\set{p_1, \ldots, p_{M(s)}}$ for all $s\in (0,T]$ proving the first equality. To see the second equality, first note that, by definition, $\partial E_s\subset \Gamma_s$.  Now suppose that $x_0\in \Gamma_s\setminus\set{p_1, \ldots, p_{M(s)}}$. By what we have already shown, $x_0\in \spt(\mu_s)$ and Item (4) of Proposition \ref{BWSumProp} both hold at $(x_0,s)$.  Hence, there is a $R_0>0$ so for all $R\in (0,R_0)$, 
$$
\spt(\mu_s)\cap B_{R}^*(x_0)=\partial E_s\cap B_{R}^*(x_0)
$$ 
and this intersection is non-empty.  As the topological boundary of a set is closed, $x_0\in \partial E_s$ and so $\Gamma_s\setminus\set{p_1, \ldots, p_{M(s)}}\subset \partial E_s$, completing the proof of the second equality. As $\spt(\mu_s)=\partial E_s \setminus \set{p_1,\ldots, p_{M(s)}}$,  Item (4) of Proposition \ref{BWSumProp} and the above argument implies that $\partial E_s \setminus \set{p_1, \ldots, p_{M(s)}}\subset \partial(\Real^4\setminus \bar{E}_s)$. Clearly, any $p_i\in \bar{E}_s$ is an interior point and so $p_i\not\in \partial \bar{E}_s$. Hence, as $\partial(\Real^4\setminus \bar{E}_s)=\partial \bar{E}_s \subset \partial E_s$, the third equality follows.

To complete the proof, first observe that, if $\Gamma_s$ is connected and not a single point, then $M(s)=0$ -- i.e., there are no closed singularities at time $s$ and $\Gamma_s=\partial E_s$.  By definition, $E_s\subset W^+[s]$ and $\partial E_s\subset \partial W^+[s]\subset \Gamma_s$.  As $\partial E_s=\Gamma_s$, this immediately implies $\Gamma_s=\partial W^+[s]$.    Similarly, by definition $\partial W^-[s]\subset \Gamma_s$ and so, for any $x\in \partial W^-[s]$,  Item (4) of Proposition \ref{BWSumProp} implies that there is an $R>0$ so that  $B_R^*(x)\cap \Gamma_s$ divides $B_R^*(x)$ into exactly two components, $U^\pm(x)$, with
$\partial U^\pm (x)\cap B_R^*(x)= \Gamma_{s}\cap B_R^*(x)$ .  

Moreover,  up to relabeling, $U^+(x)\subset E_s$ and $U^-(x)\cap E_s=\emptyset$.  As $x\in \partial W^-[s]$ and $W^-[s]\cap E_s=\emptyset$, $U^-(x)\subset W^-[s]$ and so $\partial W^-[s]\cap B_R^*(x)=\Gamma_s\cap B_R^*(x)$.  Hence, as $x\in \partial W^-[s]\subset \Gamma_{s}$, $B_R(x)\cap \Gamma_{s}\subset \partial W^-[s]$ and so $\partial W^-[s]$  is an open non-empty subset of $\Gamma_{s}$.  As $\partial W^-[s]$ is also closed and $\Gamma_{s}$ is assumed to be connected, $\Gamma_s= \partial W^-[s]$.  

Finally, let $\Omega=W^+[s]\backslash E_s$.  As $\partial E_s=\Gamma_s=\partial W^+[s]$, $\partial \Omega\subset \Gamma_s$.  For each $x\in \Gamma_s$, Item (4) of Proposition \ref{BWSumProp}, implies that, for $R$ sufficiently small, $B_R(x)\backslash \Gamma_s$ consists of two components one disjoint from $E_s$ and one contained in $E_s$.  As $B_R(x)\cap W^-[s]\neq \emptyset$ the component disjoint from $E_s$ is contained in $W^-[s]$ and so is disjoint from $\Omega$.   Likewise, the component contained in $E_s$ is disjoint from $\Omega$ by construction.  Hence, $\Omega\cap B_R(x)=\emptyset$ and so $x\not\in \partial \Omega$.  As $x$ was arbitrary, this means $\partial \Omega=\emptyset$ which implies $\Omega=\emptyset$.  That is,  $E_s=W^+[s]$.
\end{proof}

We use the preceding results and ideas from \cite{W4} to show that strong canonical boundary motions remain connected until they disappear.  That is, we show Theorem \ref{main} for strong canonical boundary motions. 
\begin{prop}\label{leftcontstrong}
Let $(E_0, E, \mathcal{K}=\set{\mu_t}_{t\geq 0})$ be a strong canonical boundary motion in $\Real^4$ with $\partial E_0$ connected and $\lambda[\partial E_0]<\Lambda_2$.
If $\set{\Gamma_t}_{t\in [0,T]}$ is the level set flow with $\Gamma_0=\partial E_0$ and extinction time $T$, then $\Gamma_t$ is connected and $n(t)=2$ for all $t\in [0,T)$.  \end{prop}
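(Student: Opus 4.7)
The plan is to run a continuity argument on $[0,T)$. Set
\[
J=\set{t\in[0,T):\Gamma_t\text{ is connected and }n(t)=2},
\]
with the goal $J=[0,T)$. The base case $0\in J$ is immediate: $\Gamma_0=\partial E_0$ is a smooth closed connected hypersurface (connectedness being inherited from the setup of Theorem~\ref{main}), so Jordan--Brouwer yields $n(0)=2$. I will argue that $J$ is both closed and right-open in $[0,T)$.

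\textbf{Closedness.} Suppose $t_i\in J$ with $t_i\to t^*\in[0,T)$. Lemma~\ref{LevSetConnLem} gives that $\Gamma_{t^*}$ is connected. The entropy bound $\lambda(\partial E_0)<\Lambda_2$ combined with Proposition~\ref{BWSumProp}(3) forces every tangent flow at a point $(x,t)$ with $t\in(0,t^*]$ to be either trivial or asymptotically conical: a closed self-shrinker tangent flow would require an isolated shrinking closed component of $\Sigma_t$, which is incompatible with $\Sigma_t$ being connected (a consequence of $n(t)=2$, $\Gamma_t$ connected, Proposition~\ref{StrongBdryMotProp}, and Lemma~\ref{TopLem}), since the prior separation of such a component would itself require a forbidden cylindrical tangent flow. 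Hence Proposition~\ref{StrongBdryMotProp} applies at $t^*$ and $E_{t^*}=W^+[t^*]$. To conclude it suffices to show $E_{t^*}$ is connected. If $E_{t^*}$ had components $F_1,F_2,\dots$, Proposition~\ref{BWSumProp}(4) would force $\bar F_j\cap\bar F_k=\emptyset$ for $j\neq k$: any point of such an intersection would lie on $\Gamma_{t^*}$ with an asymptotically conical tangent flow, but the local 2-sided partition of the punctured ball supplied by Item~(4) is incompatible with two distinct components of $E_{t^*}$ approaching that point. Choose pairwise disjoint open neighborhoods $N_j\supset\bar F_j$; by continuity of the Evans--Spruck function $u$, for $t_i$ close to $t^*$ both $\Gamma_{t_i}$ and $E_{t_i}$ lie in $\bigcup_j N_j$ while $E_{t_i}$ meets each $N_j$ nontrivially, contradicting the connectedness of $E_{t_i}=W^+[t_i]$. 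Hence $E_{t^*}$ is connected and $t^*\in J$.

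\textbf{Openness.} Set $t^*=\sup\set{s\in[0,T):[0,s]\subset J}$ and suppose for contradiction $t^*<T$; closedness gives $t^*\in J$. At each $x_0\in\Gamma_{t^*}$ the hollow space-time cylinder $C(x_0,t^*,\rho)$ constructed in the proof of Proposition~\ref{BWSumProp}(4) supplies a connected proper hypersurface inside $\Sigma_t\cap(B_{2\alpha\rho}(x_0)\setminus\bar B_{\alpha\rho/2}(x_0))$ partitioning the spatial annulus into two sides for every $t\in(t^*-\rho^2,t^*+\rho^2)$. Covering the compact set $\Gamma_{t^*}$ by finitely many such cylinders yields a uniform $\epsilon>0$ on which this 2-sided space-time structure persists. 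A Hausdorff-convergence argument analogous to the closedness step, combined with Lemma~\ref{TopLem}, then produces $\Gamma_t$ connected and $n(t)=2$ on $[t^*,t^*+\epsilon)$, contradicting the maximality of $t^*$.

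The main obstacle is this last step: ruling out the birth of a new bounded component of $W[t]$ just past $t^*$. The low entropy hypothesis is essential, as it forbids the cylindrical and $\mathbb S^2\times\Real$ tangent flows that drive neck-pinch disconnections, while the surviving asymptotically conical tangent flows carry the persistent 2-sided local structure of Proposition~\ref{BWSumProp}(4). Globalizing this local information along the compact hypersurface $\Gamma_{t^*}$, following the ideas from \cite{W4} alluded to in the statement, is the delicate point of the argument.
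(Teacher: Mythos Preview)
Your closedness argument has the right shape but the justification for excluding closed tangent flows is garbled. The phrase ``prior separation of such a component would itself require a forbidden cylindrical tangent flow'' is not the mechanism, and is circular in any case since connectedness is what you are proving. The correct reasoning, and the one in the paper, is direct: for $s<t\le t^*$ you already know $\Gamma_s$ is connected, so a closed tangent flow at $(x,t)$ forces the \emph{entire} connected $\Gamma_s$ to be the small closed shrinker collapsing to $x$, whence the flow goes extinct at $t\le t^*<T$, a contradiction. Your argument that $E_{t^*}$ is connected is also more elaborate than needed: once Item~(4) of Proposition~\ref{BWSumProp} gives pairwise disjoint $\bar F_j$, the connected set $\Gamma_{t^*}=\partial E_{t^*}=\bigcup_j\partial F_j$ must lie in a single $\bar F_j$, so only one $F_j$ can be nonempty. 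The detour through $E_{t_i}\subset\bigcup_jN_j$ is unnecessary and, as written, assumes finitely many components without justification.

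The genuine gap, which you yourself flag, is the openness step. Covering $\Gamma_{t^*}$ by the hollow space-time cylinders $C(x_0,t^*,\rho)$ controls $\Sigma_t$ only in \emph{annuli} around the finitely many singular points of $\Gamma_{t^*}$; it says nothing about what $\Gamma_t$ does inside the inner balls $B_{\alpha\rho/2}(x_i)$ for $t>t^*$. A conical singularity could in principle resolve by producing a new bounded component there, and no amount of Hausdorff-convergence reasoning on $\Gamma_{t^*}$ rules this out. The paper does not attempt to prove openness directly. Instead it argues as follows. First, since limits of collapsed singularities are collapsed, there is a $\delta_0>0$ with no compact singularities in $[t^*,t^*+\delta_0]$. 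Then White's component map $\pi_{s,t}:\mathcal C[s]\to\mathcal C[t]$ from \cite{W4} is surjective for $t^*\le t<s\le t^*+\delta_0$, so $n(\cdot)$ is nondecreasing there; Item~(2) of Proposition~\ref{BWSumProp} supplies a smooth time making $n$ finite, so $k=\inf n$ on $(t^*,t^*+\delta_0)$ is finite and attained on an initial interval $(t^*,t^*+\delta_1)$, on which $\pi_{s,t}$ is therefore a \emph{bijection}. If $k=2$ then Lemma~\ref{TopLem} gives $\Gamma_t$ connected, contradicting the definition of $t^*$; hence $k\ge3$. Now the pigeonhole step: since $n(t^*)=2<k$, two distinct components of $W[t']$ for $t'\in(t^*,t^*+\delta_1)$ are connected via time-like paths to the same component of $W[t^*]$. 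The connecting path in $W[t^*]$ has positive distance to $\Gamma_{t^*}$, so the avoidance principle pushes it forward to $W[t]$ for $t\in(t^*,t^*+\delta_2)$, making $\pi_{t',t}$ two-to-one there and contradicting bijectivity. This pigeonhole-plus-avoidance argument is the missing ingredient, and it is not recoverable from your annular covering.
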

\begin{proof} 
As $\partial E_0$ is connected, bounded and $\partial E_0=\Sigma$ is compact, $W^+[0]=E_0$.  As $\Sigma$ is a connected hypersurface, there is a $\delta>0$ so that  $\Gamma_{t}$ is a smooth flow for $t\in [0,\delta]$ and so $\Gamma_t$ is connected, $n(t)=2$  and $W^+[t]=E_t$ for $t\in [0,\delta]$.  Let
$$
t_{dis}=\sup\{t\in (0,T)|n(s)=2 \mbox{ and $\Gamma_s$ is connected for all $0\leq s<t$}\}
$$ 
be the first possible disconnection time.  Clearly, $t_{dis}>\delta$ and if $t_{dis}=T$, then we are done.  In what follows we suppose $t_{dis}<T$ and derive a contradiction.  

First,  observe that, by construction, $t_{dis}$ must be a singular time, but not the extinction time of the flow. As such, for any $(x,t_0)\in \Real^4\times (0, t_{dis}]$, for which $\mathcal{K}$ has positive Gaussian density all tangent flows to $\mathcal{K}$ at $(x,t_{0})$ are asymptotically conical. 
Indeed, by Proposition \ref{BWSumProp}, if a tangent flow at $(x, t_0)$ was closed, then, as $\Gamma_t$ was connected for $t<t_0\leq t_{dis}$, for $t<t_{0}$ and $t$ close enough to $t_{0}$, $\spt({\mu_t})$ would also be a closed connected hypersurface. This would imply that the whole flow becomes extinct at $t_{0}$, contradicting the fact that $t_{dis}<T$ is not the extinction time.

By Lemma \ref{LevSetConnLem} and the definition of $t_{dis}$, $\Gamma_{t}$ is connected for all $t\in [0, t_{dis}]$.  Hence,  by Proposition \ref{StrongBdryMotProp}, for all $t\in [0, t_{dis}]$, $\Gamma_t=\spt(\mu_t)=\partial W^\pm[t]$ and $W^+[t]=E_t$.  We conclude that $n(t_{dis})=2$.  Indeed, if $n(t_{dis})\geq 3$, then, as $W^-[t_{dis}]$ is connected, there is a component, $\Omega$, of $W^+[t_{dis}]$ so $\Omega'=W^+[t_{dis}]\backslash \Omega$ is non-empty.  As $E_{t_{dis}}=W^+[t_{dis}]=\Omega\cup \Omega'$,  $\Omega\cap \Omega'=\emptyset$ and $\Omega, \Omega'$ are both open, $\Gamma_{t_{dis}}=\partial E_{t_{dis}}=\partial\Omega\cup \partial \Omega'$.  Hence, as $\Gamma_{t_{dis}}$ is connected, there is an $x\in  \partial\Omega\cap \partial \Omega'$.  By Item (4) of Proposition \ref{BWSumProp}, there is an $R>0$ so that $B_R^*(x)\cap E_{t_{dis}}$ has exactly one non-empty component, namely, $B_{R}^*(x)\cap \Omega=B_R^*(x)\cap \Omega'$.  This contradicts $\Omega\cap \Omega'=\emptyset$ and implies $n(t_{dis})=2$.

We claim there is a $t_1\in (t_{dis}, T)$ so $n(t_1)>2$.  If not, then, for all $t\in (t_{dis}, T)$, $n(t)=2$ and there would be no compact singularities at time $t$ as otherwise the flow would become extinct at $t<T$. Moreover, $W^+[t]=E_t$ for all $t\in (t_{dis},T)$.  This is because there is always exactly one unbounded component, $W^-[t]$, and so $E_t$ would have to be the unique component of $W^+[t]$.  As there are no compact singularities in  $[0, T)$, Proposition \ref{StrongBdryMotProp} implies  $\Gamma_{t}=\partial E_t=\partial W^+[t]$ and $\Gamma_t=\partial (\Real^4\setminus \bar{E}_t)=\partial W^-[t]$ and so $\Gamma_t$ is connected by  Lemma \ref{TopLem}.  That is, $t_{dis}=T$ which contradicts our assumption.

For each $t\in [0,T]$, let $\mathcal{C}[t]$ be the set of components of $W[t]$.   By \cite[Theorem 5.2]{W4}, for any $0\leq t <s \leq T$, there is a well-defined map $\pi_{s, t}: \mathcal{C}[s]\to \mathcal{C}[t]$ given by $\pi_{s,t}(\Omega_s)=\Omega_t$ if and only if there is a time-like continuous path in $W[t, s]$, connecting a point in $\Omega_s\times\set{s}$ to a point in $\Omega_{t}\times \set{t}$. 
 As already observed, $n(t_1)>2$, while $n(t_{dis})=2$.  Hence, the pigeonhole principle implies that there are two distinct components $\Omega_1, \Omega_2\in \mathcal{C}[t_1]$ so that $\pi_{t_1, t_{dis}}(\Omega_1)=\pi_{t_1,t_{dis}}(\Omega_2)=\Omega_{0}\in \mathcal{C}[t_{dis}]$ As $n(t_{dis})=2$, either $\Omega_0=W^+[t_{dis}]=E_{t_{dis}}$ or $\Omega_0=W^{-}[t_{dis}]$.  In the former case, $\Omega_1, \Omega_2\subset E_{t_1}$ and in the latter $\Omega_1$ and $\Omega_2$ are both disjoint from $E_{t_{1}}$. 

 Pick $x_1\in \Omega_1$ and $x_2\in \Omega_2$.  By definition, $(x_1,t_1),(x_2,t_1)$ are each connected via time-like paths in $W[t_{dis}, t_1]$ to the same component, $\Omega_0$, of $W[t_{dis}]\times \{t_{dis}\}$. Label the two paths, $p_1(s),p_2(s)$,  so that $p_1(1)=(x_1,t_1),p_2(1)=(x_2,t_1)$. As $p_1(0),p_2(0)$ are in the same component of $W[t_{dis}]\times\set{t_{dis}}$, there is a path $p_3$ in $ W[t_{dis}]$ so that $(p_3(0),t_{dis})=p_1(0), (p_3(1), t_{dis})=p_2(0)$. 
By the avoidance principle, there is a universal constant $C>0$ so that if $B_r(y)\cap\Gamma_{t_{dis}}=\emptyset$, then $(y,t)\subset W[t]$ for any $t\in[t_{dis},t_{dis}+Cr^2]$.
As $p_3([0,1])$ is compact, we can choose $0<r_0<\mathrm{dist}(p_3[0,1],\Gamma_{t_{dis}})$.  Hence,
\begin{equation*}
p_3([0,1])\times[t_{dis},t_{dis}+Cr_0^2]\subset  W[t_{dis},t_{dis}+Cr_0^2]
\end{equation*}
As such, if $\delta_1= \min\set{\frac{t_1-t_{dis}}{2},Cr_0^2}$, then for any $t\in (t_{dis}, t_{dis}+\delta_1)$, $(x_1,t_1), (x_2,t_1)$ can also be connected via time-like paths in $W[t,t']$ to the same components of $W[t]$.  That is,  $\pi_{t_1, t}(\Omega_1)=\pi_{t_1,t}(\Omega_2)$.

Now let
$$
I=\set{s\in [t_{dis}, t_1]:  \pi_{t_1, s}(\Omega_1)\neq\pi_{t_1,s}(\Omega_2)}.
$$
Clearly, $t_1\in I$ and, by what we just established $[t_{dis}, t_{dis}+\delta_1)\cap I=\emptyset$.
Let $t_*=\inf(I)$.  So $t_{dis}+\delta_1\leq t_*\leq t_1$ and $t_*$ is a singular time of the flow. Moreover, by the openness argument of the previous paragraph, $t_*\in I$. 

Let $\Omega^*_1=\pi_{t_1, t_*}(\Omega_1)$ and $\Omega_2^*=\pi_{t_1,t_*}(\Omega_2)$ be distinct components of $W[t_*]$.  If  $\mathcal{C}^*=\pi_{t_{*}, t_{dis}}^{-1} (\Omega_0)$, then  $\Omega_1^*$ and $\Omega_2^*$ are elements of this set and all elements of $\mathcal{C}^*$ are either subsets of $E_{t_*}$ or all are disjoint from $E_{t_*}$.  In fact, as $n(t_{dis})=2$, either
$$
E_{t_*}=\bigcup_{\Omega^*\in\mathcal C^*} \Omega^* \; \; \mbox{ or }\;\; W[t_*]\setminus E_{t_*}=\bigcup_{\Omega^*\in\mathcal C^*} \Omega^*.
$$
As there is only one unbounded component of $W[t_*]$,  we may, by relabeling,  assume that $\Omega_1^*$ is bounded.
\begin{claim}\label{BdryIntersection}
There is a point $p\in \partial \Omega_1^*$ so that, for any $R>0$, there exists an element $\Omega^*\in \mathcal{C}^*$ distinct from $\Omega_1^*$ so that $B_{R}(p)\cap\Omega^*\neq\emptyset$. Observe, it is possible $\Omega^*\neq \Omega_2^*$.
\end{claim}
To prove the claim, we only need to prove for the case both $\Omega_1^*,\Omega_2^*\subset E_{t_*}$, the case that they are both disjoint from $E_{t_*}$ follows  from the same argument.  If the claim is false, then for any $p\in\partial\Omega_1^*$, there is $R_p>0$ such that for any $\Omega^*\in C^*, \Omega^*\neq \Omega_1^*$, one has $B_{R_p}(p)\cap\Omega^*=\emptyset$. As $\Omega_1^*$ is assumed bounded,  $\partial\Omega_1^*$ is compact, and so there is a uniform $R_0$ such that $\dist(\partial\Omega_1^*,\cup_{\Omega^*\in\mathcal C^*, \Omega^*\neq\Omega_1^*}\partial\Omega^*)>R_0>0$.  As $E_{t_*}=\bigcup_{\Omega^*\in\mathcal C^*} \Omega^*$,
\begin{align*}
Z_{R_0}=\left\{x:\frac{1}{4}R_0\leq \dist(x,\Omega_1^*)\leq \frac{3}{4}R_0\right\}\cap \bar{E}_{t_*}=\emptyset.
\end{align*}
Here $Z_{R_0}$ is compact. By Proposition \ref{StrongBdryMotProp}, as $\Gamma_{t_*}\setminus \bar{ E}_{t_*}$ consists of a finite set of isolated points, one may shrink $R_0$ so
\begin{align}\label{DisconnectAtt*}
Z_{R_0}\cap \left( \bar{E}_{t_*}\cup \Gamma_{t_*}\right) =\emptyset.
\end{align}
As we are considering a strong canonical boundary motion, this implies
$$
Z_{R_0}\times \set{t_*}\cap \bar{E}=\emptyset.
$$
Hence, as $\bar{E}$ is a closed set and $Z_{R_0}$ is a compact set, there is a $\delta_*>0$ so
\begin{equation}\label{ZNotIntersectE}
Z_{R_0}\times[t_*-\delta_*, t_*]\cap \bar{E}=\emptyset
\end{equation}
An immediate consequence of this is that, $\pi_{t_{*}, s}(\Omega^*_1)$ is disjoint from $\pi_{t_*, s}(\Omega^*)$ for any $\Omega^*\in \mathcal{C}^*$ not equal to $\Omega_1^*$ and  all $s\in [t_*-\delta_*,t_*]$. Indeed, otherwise there would be a continuous space-time curve connecting $\Omega^*_1$ and some distinct component, $\Omega^*$, of $\mathcal C^*$ that lies entirely in $E\cap W[t_*-\delta_*, t_*]$. However, such a curve would have to intersect $Z_{R_0}$, contradicting \eqref{ZNotIntersectE}.  Hence,  $\pi_{t_1, s}(\Omega_1)\neq\pi_{t_1,s}(\Omega_2)$ for all $ s\in[t_*-\delta_*,t_*]$, contradicting the definition of $t_*$.

To complete the proof, observe that, for the point $p$ given by the claim, one has that, for any small $R$, either: 
\begin{enumerate}
	\item $B_{R}^*(p)\backslash \partial E_{t^*}$ contains at least three components;
	\item $B_{R}^*(p)\backslash \partial E_{t^*}$ contains two components of $E_{t_*}$;
	\item $B_{R}^*(p)\backslash \partial E_{t^*}$ contains two components both disjoint from $E_{t_*}$.  
\end{enumerate}

In any case, take a tangent flow at $P=(p,t_*)$.  The point $P$ cannot be a compact singularity by the choice of $p$ and so the tangent flow is asymptotically conical.  By Item (4) of Proposition \ref{BWSumProp}, for small enough $R>0$, the ball $B_R^*(p)\setminus\partial E_{t^*}$ has only two connected components, one contained in $E_{t_*}$ and one disjoint from $E_{t_*}$, so none of the above three situations can happen. This contradiction completes the proof.

\end{proof}

\section{Proof of Theorem \ref{main}}
In this section, we will show Theorem \ref{main}.  In fact, we will show a stronger result from which Theorem \ref{main} is an immediate consequence.
\begin{thm}\label{RefinedMainThm}
Let $\Sigma$ be a smooth closed connected hypersurface in $\Real^4$ with $\lambda[\Sigma]\leq \Lambda_2$.  If $\set{\Gamma_t}_{t\in [0,T]}$ is the level set flow with $\Gamma_0=\Sigma$ and extinction time $T$, then, for all $t\in [0,T]$,  $\Gamma_t$ is connected and $n(t)\leq 2$.  
Moreover, if 
$$E^+=W^+[0,T]\mbox{ and } E^-=W^-[0,T]\cup \left(\Real^4\times (T,\infty)\right),$$
then $E^\pm$ are both sets of locally finite perimeter in $\Real^4\times [0,\infty)$ and there are Brakke flows $\mathcal{K}^\pm$ so that
$$
(\tau^\pm=\pm \left(\partial [E^\pm]+[W^\pm[0]\times \set{0}]\right), \mathcal{K}^\pm)
$$
are both matching motions with initial condition $[\Sigma\times\set{0}]$.  Finally, 
$$
\overline{\partial^*E^\pm}=\partial E^\pm
$$
in $\Real^4\times(0,\infty)$. 
\end{thm}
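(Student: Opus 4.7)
The plan is to reduce to the strong canonical boundary motion case already treated in Proposition \ref{leftcontstrong} by a careful approximation argument, and then pass to the limit.

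First I would produce a sequence of smooth, closed, connected hypersurfaces $\Sigma_i\subset \Real^4$ with $\Sigma_i\to \Sigma$ smoothly, $\lambda(\Sigma_i)<\Lambda_2$ strictly, and such that each $\Sigma_i=\partial E_0^i$ is the initial hypersurface of a strong canonical boundary motion $(E_0^i, E^i, \mathcal{K}^i)$. Genericity of strong canonical boundary motions is \cite[12.11]{I1}. To achieve $\lambda(\Sigma_i)<\Lambda_2$ in the borderline case $\lambda(\Sigma)=\Lambda_2$, I would first evolve $\Sigma$ by classical MCF for a small time: Huisken's monotonicity formula gives a strict decrease of entropy, since no closed hypersurface in $\Real^4$ can be a shrinking self-similar soliton of entropy $\Lambda_2$, and the slightly flowed surface can then be $C^\infty$-approximated by generic $\Sigma_i$. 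In the case $\lambda(\Sigma)<\Lambda_2$, continuity of $\lambda$ under $C^\infty$ convergence makes this step immediate.

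Applying Proposition \ref{leftcontstrong} to each $\Sigma_i$, I get $\Gamma^i_t$ connected and $n^i(t)=2$ on $[0,T_i)$, along with matching motions $(\tau^{\pm,i},\mathcal{K}^{\pm,i})$ associated to $E^{+,i}=W^+[0,T_i]$ and $E^{-,i}=W^-[0,T_i]\cup(\Real^4\times(T_i,\infty))$. I would then pass to the limit. Continuity of the level set flow under Hausdorff convergence of initial data gives $\Gamma^i_t\to\Gamma_t$ uniformly on compact time intervals and $T_i\to T$; the Hausdorff limit of connected compacta is connected, yielding connectedness of $\Gamma_t$, and upper semicontinuity of the number of components of the complement yields $n(t)\leq 2$. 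The entropy bound $\lambda(\Sigma_i)<\Lambda_2<2$ gives uniformly bounded local area ratios, so Brakke's compactness theorem \cite[7.1]{I1} and standard BV compactness yield subsequential limits: Brakke flows $\mathcal{K}^\pm$ and sets $E^\pm$ of locally finite perimeter, with $\tau^{\pm,i}\to \tau^\pm=\pm(\partial[E^\pm]+[W^\pm[0]\times\set{0}])$ in the flat topology. Conditions (1)--(3) in the definition of enhanced motion are preserved by continuity of the boundary operator, (4) by lower semicontinuity of mass, and (5) via Item (2) of Proposition \ref{BWSumProp}, which ensures multiplicity one at generic times along the approximating sequence.

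Finally, for $\overline{\partial^* E^\pm}=\partial E^\pm$ in $\Real^4\times(0,\infty)$: the inclusion $\subseteq$ is automatic. For the reverse, at any $(x_0,t_0)\in\partial E^\pm$ any tangent flow of $\mathcal{K}$ is either a hyperplane, a closed shrinker, or a non-compact shrinker of entropy at most $\Lambda_2$. The closed case is ruled out by the argument used in Proposition \ref{leftcontstrong}, using that $\Gamma_s$ is already known to be connected for all $s\leq t_0$; in the remaining cases Item (4) of Proposition \ref{BWSumProp}, or a direct cylindrical analog in the borderline case, produces a punctured neighborhood of $(x_0,t_0)$ in which $\partial^* E_t=\partial E_t$, forcing $(x_0,t_0)\in\overline{\partial^* E^\pm}$.

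The hardest step I expect is preserving the matching-motion structure (especially condition (5)) under the Brakke limit, since higher multiplicity can in principle appear at singular times; the way to control this is to invoke the structural regularity of Proposition \ref{BWSumProp} along the approximating sequence so that the limit inherits multiplicity one at a dense set of times. A secondary subtlety is the borderline case $\lambda(\Sigma)=\Lambda_2$, in which the tangent flow classification of Proposition \ref{BWSumProp}(3) admits cylinders; extending Item (4) of that proposition to cover the cylindrical case is needed to close the argument for part (C).
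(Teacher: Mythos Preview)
Your overall strategy---approximate by strong canonical boundary motions, apply Proposition \ref{leftcontstrong}, pass to the limit---is the same as the paper's, and your reduction to the strict case $\lambda(\Sigma)<\Lambda_2$ is correct. But two steps in your limit argument do not go through as written.

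\textbf{Connectedness of $\Gamma_t$.} You assert that continuity of the level set flow in the initial data gives $\Gamma^i_t\to\Gamma_t$ in Hausdorff distance, and then invoke ``Hausdorff limits of connected compacta are connected.'' Stability of viscosity solutions gives $u^i\to u$ uniformly, but this does \emph{not} force $\{u^i(\cdot,t)=0\}\to\{u(\cdot,t)=0\}$ in Hausdorff distance---precisely because the limit flow may fatten, and zero level sets are unstable under uniform perturbation of the function. The paper avoids this entirely by choosing the approximating surfaces as a \emph{two-sided foliation} $\Sigma_{s_i}=\{u_0=s_i\}$ with $s_{-i}<0<s_i$, all level sets of the \emph{same} $u$. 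Then $G_i[t]=\{s_{-i}\le u(\cdot,t)\le s_i\}$ is a nested family of compact sets with $\bigcap_i G_i[t]=\Gamma_t$; each $G_i[t]$ is shown connected using Proposition \ref{leftcontstrong} applied to the outer approximation, and the nested intersection of compact connected sets is connected. Your one-sided generic perturbation does not give this nesting, and without it neither the connectedness of $\Gamma_t$ nor the bound $n(t)\le 2$ follows. The two-sided foliation is also what produces two distinct limit flows $\mathcal{K}^+$ (from the inner family) and $\mathcal{K}^-$ (from the outer family); a single approximating sequence does not obviously yield both.

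\textbf{Reduced boundary.} You try to apply Proposition \ref{BWSumProp}(4) directly to the limit flow at a point of $\partial E^\pm$. But Proposition \ref{BWSumProp} is stated for strong canonical boundary motions, and the limit need not be one (the level set flow of $\Sigma$ may fatten). Your attempt to rule out closed tangent flows also fails at the extinction time $t_0=T$. The paper instead works on the approximating flows: given $(x,t)\in\partial E^+$, it finds nearby points on $\partial E^i$, uses Proposition \ref{StrongBdryMotProp} on the $i$-th flow to get positive Gaussian density for $\mathcal{K}^i$ there, and then passes to the limit via upper semicontinuity of Gaussian density; the matching-motion structure of $(\tau^+,\mathcal{K}^+)$ then forces $(x,t)\in\overline{\partial^*E^+}$.

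Finally, your worry about cylinders in the borderline case is misplaced: once you have reduced to $\lambda(\Sigma)<\Lambda_2$, no tangent flow can have entropy $\Lambda_2$, so $\mathbb{S}^2\times\Real$ is already excluded. For the preservation of the matching-motion structure under Brakke limits, the paper simply cites \cite[Theorem 3.4]{W}, which handles exactly this compactness under the entropy bound $\lambda<2$.
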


\begin{proof}
First observe that we may assume $\lambda(\Sigma)<\Lambda_2$.  Indeed, suppose that $\lambda(\Sigma)=\Lambda_2$ and consider,  $\set{\Sigma_t}_{t\in [0, \delta]}$,  the classical solution to \eqref{MCF} with $\Sigma_0=\Sigma$.   As $\Sigma$ is closed, $\lambda(\Sigma)=F[\rho^{-1}( \Sigma-x)]$ for some $\rho>0$ and $x\in \Real^{n+1}$.  Hence, by the Huisken monotonicity formula, either $\lambda[\Sigma_{\delta}]<\Lambda_2$ or $\Sigma=\rho \Upsilon+x$ where $\Upsilon$ is a closed self-shrinker.  In the latter case, the theorem is immediate (as the flow will remain smooth until disappearing), while in the former, one can prove the result for $\Sigma_{\delta}$ and then use the fact that the flow was smooth to conclude it also for $\Sigma$.

As $\Sigma$ is a closed connected hypersurface in $\Real^4$, standard topological results, e.g., \cite{S}, imply that there is a connected bounded domain $E_0\subset \Real^4$ with $\partial E_0=\Sigma$.  Let $\mathbf{n}$ be the unit normal to $\Sigma$ that points into $E_0$.  As $\Sigma$ is smooth, there is an $\epsilon>0$ so for $|s|<\epsilon$
$$
\Sigma_s=\set{p+s\mathbf{n}(p)| p\in \Sigma} 
$$
is a foliation of ${T_{\epsilon}(\Sigma)}$ by hypersurfaces .  By shrinking $\epsilon$, if needed, we can also ensure that $\lambda(\Sigma_s)<\Lambda_2$ for $|s|< \epsilon$.
Pick a Lipschitz function $u_0:\Real^4\to \Real$ with the property that
\begin{enumerate}
\item $\set{u_0=s} =\Sigma_s$ for $|s|<\epsilon$,
\item $\set{u_0\leq -\epsilon}$ is the unbounded component of $\Real^4\backslash T_{\epsilon}(\Sigma)$; and 
\item $\set{u_0\geq \epsilon}$ is the bounded component of $\Real^4\backslash T_{\epsilon}(\Sigma)$.
\end{enumerate} 
Let $u$ be the solution to \ref{levelsetflow} with initial data $u_0$.  As such, if  $\Gamma_t^s=\set{x| u(t,x)=s}$, then for $|s|<\epsilon$, $\set{\Gamma_t^s}_{t\geq 0}$ is the level set flow with  $\Gamma_0^{s}=\Sigma_s$.  For each $i\geq 1$, pick $s_{\pm i} \in (-\epsilon, \epsilon)$ so that $s_{-i}<s_{-i-1}<0<s_{i+1}<s_{i}$ and $\lim_{i\to \pm \infty} s_i=0$.  Let $E_0^i=\set{u_0> s_i}$ and $E^{i}=\set{u> s_i}$.   By \cite[12.11]{I1}, one can choose the $s_i$ so that for $i\neq 0$, there are Brakke flows $\mathcal{K}^{i}$ so that $\left(E_0^{i}, E^{i}, \mathcal{K}^{i}\right)$ are all strong canonical boundary motion.

By Proposition \ref{leftcontstrong}, each $\Gamma_t^{i}=\Gamma^{s_i}_t=\set{u=s_i}$ is connected and for $t\in [0, T_i)$, where $T_i$ is the extinction time of the flow,  divides $\Real^4$ into two components $W^{\pm}_{i}[t]$ which satisfy $\Gamma_t^i=\partial W^\pm_i [t]$ and $W^+_i[t]=E_t^i=\set{x|u(t,x)>s_i}$.
Consider the open sets
$$
U^+[t]=\bigcup_{i=1}^\infty W^{+}_{i}[t]=\set{x| u(x,t)>0} \mbox{ and } U^-[t]=\bigcup_{i=1}^\infty W^{-}_{-i}[t]=\set{x| u(x,t)<0}.
$$
As each $W_i^\pm [t]$ is connected and $U^\pm[t]$ is their nested union, it follows that both the $U^\pm[t]$ are also connected. Moreover, as
$$
\Gamma_t=\set{x| u(x,t)=0} = \Real^4\backslash \left( U^+[t]\cup U^-[t]\right),
$$
 $W^\pm[t]=U^\pm[t]$. For $i\geq 1$ let,
$$
G_i[t]=\Real^4\backslash \left( W_{i}^+[t]\cup W_{-i}^-[t]\right)=\set{x| s_{-i}\leq u(x,t)\leq s_i} 
$$
and observe that each $G_i[t]$ is a compact set,  $G_{i+1}[t]\subset G_{i}[t]$ and $\bigcap_{i=1}^\infty G_i[t]=\Gamma_t$.
For $t\in [0,T]$, each $G_i[t]$ is connected.   Indeed,  $T_{-i}$,  the extinction time of $\set{\Gamma_t^{-i}}_{t\geq 0}$ must satisfy $T_{-i}>T$ and so, when $t\leq T$, $\Gamma_t^{-i}$ and $W_{-i}^\pm [t]$ are both non-empty and connected.  In particular, there is exactly one component,  $G_i^-[t]$, of $G_i[t]$ that contains $\Gamma_t^{-i}=\partial W^\pm_{-i}[t]$.  Let $G_i^+[t]=G_i[t]\backslash G_i^-[t]$, so $G_i^+[t]$ is closed and disjoint from $G^-_i[t]$.  Observe that $W_{-i}^-[t]\cup G_i^-[t]$ is a closed non-empty subset of $\overline{W_i^-[t]}=W_i^-[t]\cup \Gamma_t^i=\set{u\leq s_i}$  that is disjoint from $G_i^+[t]$.  As $G_i^+[t]$ is also a closed subset of $\overline{W_i^-[t]}$, $\overline{W_i^-[t]}=W_{-i}^-[t]\cup G_i^-[t]\cup G_i^+[t]$ and the closure of a connected set is connected,   $G_i^+[t]=\emptyset$, and so $G_i[t]$ is connected.  As the nested intersection of compact connected sets is connected, it follows that $\Gamma_t$ is connected and so we've proved the first part of the theorem.

To prove the second part of the theorem we observe that for $i\geq 1$,
$E^{i}=W_i^+[0,T]$ is a set of finite perimeter while
$$
F^{-i}=\set{u<s_{-i}}=\Real^4\times[0,\infty)\backslash \bar{E}^{-i}
=W_{-i}^-[0,T]\cup (\Real^{n+1}\times(T,\infty)),$$
is a set of locally finite perimeter.
 Moreover, there are matching motions 
 $$\left(\tau^{i}=\partial[E_i]+[W^{+}_{i}[0]], \mathcal{K}^{i}\right) \mbox{ and }\left(\tau^{-i}=-\left(\partial[F^{-i}]+[W^{-}_{-i}[0]]\right), \mathcal{K}^{-i}\right)$$
 with initial conditions $[\Sigma_{s_{\pm i}}\times \set{0}]$. As $\lambda(\Sigma_{s_{\pm i}})<\Lambda_2<2$,  \cite[Theorem 3.4]{W} implies that, up to passing to a subsequence, the two sequences of matching motions converge to matching motions $(\tau^+, \mathcal{K}^+)$ and $(\tau^-, \mathcal{K}^-)$ both with initial condition $[\Sigma\times\set{0}]$.  It further follows, from standard compactness results for sets of locally finite perimeter, that the $E^i$ converge,  as  sets of finite perimeter, to 
$$E^+=W^+[0,T]=\bigcup_{t\in [0,T]} U^+[t]=\set{u>0}$$
 which is also a set of finite perimeter.  Likewise, the $F^{-i}$ converge, as sets of locally finite perimeter, to  $F^-$ where
$$
F^-=W^-[0,T]\cup \left(\Real^4\times (T,\infty)\right)=\left(\bigcup_{t\in [0,T]} U^-[t]\right) \cup \left(\Real^4\times(T,\infty)\right)=\set{u<0}.
$$
Set $E^-=F^-$ and observe that $\tau^\pm =\pm \left(\partial [E^\pm]+[W^\pm[0]]\right)$ follows from 
the continuity of the boundary operator.  

It remains only to verify the claim about the reduced boundary.
To that end observe that in $\Real^4\times(0,\infty)$
$$
\overline{\partial^* E^+}\subset \partial E^+.
$$
We now suppose that $(x,t)\in  \partial E^+$ and $t>0$.
By definition, for any $r>0$, $B_r(x,t)\cap E^+\neq \emptyset$.  In particular, for $i$ sufficiently large $B_r(x,t)\cap W^+_{i}[0,T]\neq \emptyset$. As $x\in \Gamma_t$, we have $x\not\in  W^+_{i}[0,T]$ and so there is some point $(y_r,t_r)\in B_r(x,t)\cap \partial W^+_{i}[0,T]$. 
As $\left(E_0^{i}, E^i, \mathcal{K}^{i}\right)$ is a strong canonical boundary motion, it has only one compact singularity (at the terminal time $T_i<T$) and we can assume $t_r<T_i$. Hence, by Proposition \ref{StrongBdryMotProp} that $y_r\in \spt(\mu^{i}_{t_r})$ and so $(y_r,t_r)$ has positive Gaussian density for $\mathcal{K}^{i}$. 
 As $\mathcal{K}^{i}$ converges to $\mathcal{K}^+$, the upper semicontinuity of Gaussian density implies that $(x,t)$ is a point of positive Gaussian density for $\mathcal{K}^+$.  As $(\tau^+,\mathcal{K}^+)$ is a matching motion starting from $\Sigma$ and $\tau^+$ is the reduced boundary of a set of finite perimeter, $(x,t)\in \overline{\partial^* E^+}$. That is, $\overline{\partial^*E}=\partial E^+$ in $\Real^4\times(0,\infty)$.   Arguing in exactly the same way shows that $\overline{\partial^* E^-}=\partial E^-$ in $\Real^4\times(0,\infty)$.
\end{proof}

\begin{cor} \label{NonFatCor}
Let $\Sigma$ be a smooth closed connected hypersurface in $\Real^4$ with $\lambda[\Sigma]\leq \Lambda_2$.  If $\set{\Gamma_t}_{t\in [0,T]}$, the level set flow of $\Sigma$ with extinction time $T$, is non-fattening, then there is a unique strong canonical boundary motion $(E_0, E, \mathcal{K})$, with $\partial E_0=\Sigma$.
\end{cor}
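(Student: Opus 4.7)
The plan is to invoke Ilmanen's existence result for canonical boundary motions and then upgrade it to a strong canonical boundary motion using Theorem \ref{RefinedMainThm} together with the non-fattening hypothesis. First I take $E_0$ to be the unique bounded connected open region in $\Real^4$ with $\partial E_0 = \Sigma$, guaranteed by standard topology since $\Sigma$ is a closed connected smooth hypersurface. Since $\set{\Gamma_t}$ is non-fattening by hypothesis, \cite[11.4]{I1} produces an open set $E$ and a Brakke flow $\mathcal{K}$ so that $(E_0, E, \mathcal{K})$ is a canonical boundary motion; by condition (1), $E = \set{u > 0}$ for the viscosity solution $u$ of the level set equation corresponding to $E_0$.

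To promote this to a strong canonical boundary motion, it suffices to verify $\set{u = 0} = \overline{\partial^* E}$ in $\Real^4 \times (0, \infty)$. Applying Theorem \ref{RefinedMainThm} to $\Sigma$, I obtain $E^+ = W^+[0, T]$, which the proof of that theorem identifies with $\set{u > 0}$ and hence with $E$, and which satisfies $\overline{\partial^* E^+} = \partial E^+$ in $\Real^4 \times (0, \infty)$. Thus $\overline{\partial^* E} = \partial E$ there, and it remains to show $\partial E = \set{u = 0}$. The inclusion $\partial E \subset \set{u = 0}$ is immediate from continuity of $u$. For the reverse, given $(x, t) \in \set{u = 0}$ with $t > 0$, non-fattening (i.e., empty interior of $\set{u = 0}$ in space-time) places $(x, t)$ in $\overline{\set{u \neq 0}} = \overline{E^+} \cup \overline{E^-}$, so $(x, t) \in \partial E^+ \cup \partial E^-$. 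The remaining task is to upgrade this to $(x, t) \in \partial E^+$. For this I would combine the symmetric statement $\overline{\partial^* E^-} = \partial E^-$ also from Theorem \ref{RefinedMainThm} with a density argument: Proposition \ref{BWSumProp} gives that $\set{u = 0}$ is a smooth hypersurface in $\Real^4$ away from a small singular set at each time, whence it has $\mathcal{L}^5$-density zero at $\mathcal{H}^4$-almost every reduced boundary point; this forces $E^+$ to have density $1/2$ wherever $E^-$ does, so $\partial^* E^+$ and $\partial^* E^-$ coincide up to lower-dimensional sets and taking closures yields $\partial E^+ = \partial E^-$, completing condition (4).

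Uniqueness is immediate from the construction: $E_0$ is determined by $\Sigma$ as the unique bounded interior, the set $E = \set{u > 0}$ is determined by $E_0$ via uniqueness of viscosity solutions \cite{ES1}, and condition (3) of the canonical boundary motion definition forces $\mu_t = \mathcal{H}^3 \measrestrict \partial^* E_t$, pinning down $\mathcal{K}$. The main obstacle is the density argument in the previous paragraph: establishing that $\set{u = 0}$ is Lebesgue-negligible, not merely interior-empty, at reduced boundary points draws essentially on the regularity theory for low-entropy flows packaged in Proposition \ref{BWSumProp}, rather than on the bare non-fattening hypothesis --- this is precisely where the entropy bound $\lambda[\Sigma] \leq \Lambda_2$ is needed.
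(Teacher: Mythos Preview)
The paper gives no explicit proof of this corollary—it is stated immediately after Theorem~\ref{RefinedMainThm} and evidently meant as a direct consequence—so you are supplying the argument the authors left implicit. Your overall architecture (existence of a canonical boundary motion from \cite[11.4]{I1}, upgrade to \emph{strong} via Theorem~\ref{RefinedMainThm}, uniqueness from the definitions) is exactly the natural route and almost certainly what was intended. The uniqueness paragraph is correct as written.

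The one point that needs tightening is the density step, and there is a circularity risk in how you phrase it. You write that ``Proposition~\ref{BWSumProp} gives that $\{u=0\}$ is a smooth hypersurface \ldots\ away from a small singular set,'' but Proposition~\ref{BWSumProp} is stated for \emph{strong} canonical boundary motions, and its conclusions concern $\spt(\mu_t)$, not $\Gamma_t=\{u(\cdot,t)=0\}$; identifying the two is precisely condition~(4), which is what you are trying to prove. The fix is to route the regularity through objects you already control. One clean way: by non-fattening, $\{u=0\}=\partial E^+\cup\partial E^-$ in space-time; the last paragraph of the proof of Theorem~\ref{RefinedMainThm} shows every point of $\partial E^\pm$ carries positive Gaussian density for $\mathcal K^\pm$, and those Brakke flows are limits of the approximating \emph{strong} canonical boundary motions $\mathcal K^{\pm i}$ to which Proposition~\ref{BWSumProp} legitimately applies. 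This yields that each $\Gamma_t$ has $\mathcal L^4$-measure zero, hence $\mathcal L^5(\{u=0\})=0$. Then $E^+$ and $E^-$ are complementary up to a null set, so $\partial^*E^+=\partial^*E^-$, and taking closures via Theorem~\ref{RefinedMainThm} gives $\partial E^+=\partial E^-=\{u=0\}=\overline{\partial^*E}$. Alternatively, note (as in the first line of the proof of Proposition~\ref{BWSumProp}) that the underlying regularity from \cite{BW4} already applies to ordinary canonical boundary motions, so items (1)--(3) of that proposition hold for $(E_0,E,\mathcal K)$ without assuming~(4). Either way the circularity disappears and your argument closes; the entropy bound enters exactly where you said.
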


\section{Forward clearing out}
In this section apply Theorem \ref{main} to prove Corollary \ref{forward}.  
\begin{proof}[Proof of Corollary \ref{forward}]
If the Corollary is not true, then there exist $C_i\rightarrow 0,\eta_i>0,R_i>0,0<\rho_i<\frac{R_i}{2C_i}$ satisfying $\frac{\eta_i}{C_i^3}\rightarrow 0$ and a sequence of non-fattening level set flows $\set{M_{i,t}}_{t\geq 0}$ with $M_{i,0}$,  closed hypersurfaces with $\lambda(M_{i,0})\leq \Lambda_2-\epsilon$, $M_{i,t}\neq\emptyset$ for $t\in(t_0,t_0+R_i^2)$ and so that the flows reach the space-time point $(x_0,t_0)$, but satisfy
\begin{equation*}
\mathcal H^3(B_{\rho_i}(x_0)\cap M_{t_0+C_i^2\rho_i^2})<\eta_i\rho_i^3.
\end{equation*}
By Theorem \ref{RefinedMainThm} and Corollary \ref{NonFatCor}, the $M_{i,t}$ agree with the slices of a strong canonical boundary motion $(E_{i,0}, E_i, \mathcal{K}_i=\set{\mu_{i,t}})$ for $t\in(t_0,t_0+R_i^2)$.  In particular, in this time interval, by Proposition \ref{StrongBdryMotProp},
$$
\mu_{i,t}=\mathcal{H}^3\measrestrict M_{i,t}
$$
and so $\mu_{i,t}(B_{\rho_i}(x_0)) <\eta_i\rho_i^3$.

Rescale the flows to get a new flow $\tilde{K}_i=D_{\frac{1}{C_i\rho_i}}(\mathcal{K}_i-(x_0,t_0))$ and let $\{\tilde M_{i,t}\}$ be the corresponding rescaling of the level set flow $\set{M_t}$. By Brakke's compactness theorem \cite[7.1]{I1}, up to passing to a subsequence,  $\tilde{K}_i$ converges to a limit flow $\tilde{K}=\{\tilde \mu_t\}$, and moreover,  by \cite[Theorem 3.5]{W}, $(T_i, \mathcal{K}_i)$ converge to a matching motion $(\tilde{T},\tilde{K})$.  Clearly, $\lambda[\tilde{\mu}_t] \leq \Lambda_2-\epsilon$.
We also have
\begin{equation*}
\tilde{\mu}_{i,1}\left(B_{\frac{1}{C_i}}(0)\right)<\frac{\eta_i}{(C_i)^3}\to 0
\end{equation*}
That is, $\tilde{\mu}_1(\Real^4)=0$ and so the limit flow $\tilde{K}$ must be extinct before $t=1$.  As $(\tilde{T},\tilde{K})$ is a matching motion, this means that $\tilde{K}$ must develop a collapsed singularity at some $t_e\leq 1$.  The entropy bound and  the classification of singularities given in Proposition \ref{BWSumProp} imply that this singularity has compact support.  Hence, by Brakke's regularity theorem, for large enough $i$, the flow $\{\tilde M_{i,t}\}$ must develop a compact singularity at some time $\tilde t_i<2$, and hence  $\{M_{i,t}\}$ develops a compact singularity at some time $t_i<t_0+2C_i^2\rho_i^2<t_0+\frac{2R_i^2}{4}<t_0+R_i^2$. As $M_{i,t_0+R_i^2}\neq\emptyset$ and there is a compact singularity before the extinction time, the flow must disconnect before time $t_0+R_i^2$, contradicting Theorem \ref{main}.
\end{proof}
%

\end{document}